\theoremstyle{plain}
\newtheorem{thm}{\protect\theoremname}
  \theoremstyle{remark}
  \theoremstyle{plain}
  \newtheorem{lem}{\protect\lemmaname}
  \theoremstyle{plain}
  \providecommand{\lemmaname}{Lemma}
  \providecommand{\propositionname}{Proposition}
  \providecommand{\remarkname}{Remark}
  \providecommand{\theoremname}{Theorem}
  \providecommand{\tabularnewline}{\\}
\begin{document}

\date{}

\title{There is no Khintchine threshold for metric pair correlations}

\author{Christoph Aistleitner, Thomas Lachmann, and Niclas Technau}
\maketitle
\begin{abstract}
We consider sequences of the form $\left(a_{n} \alpha\right)_{n}$ mod 1, where $\alpha\in\left[0,1\right]$
and where $\left(a_{n}\right)_{n}$ is a strictly increasing sequence of positive
integers. If the asymptotic distribution of the pair correlations 
of this sequence follows the Poissonian model for almost all $\alpha$ 
in the sense of Lebesgue measure, we say that $(a_n)_n$ 
has the metric pair correlation property. Recent research 
has revealed a connection between the metric theory of pair correlations 
of such sequences, and the additive energy 
of truncations of $(a_n)_{n}$. Bloom, Chow, Gafni and Walker 
speculated that there might be a convergence/divergence criterion 
which fully characterises the metric pair correlation property 
in terms of the additive energy, similar to Khintchine's criterion 
in the metric theory of Diophantine approximation. 
In the present paper we give a negative answer to such speculations, 
by showing that such a criterion does not exist. 
To this end, we construct a sequence $(a_n)_n$ 
having large additive energy which, however, 
maintains the metric pair correlation property.
\end{abstract}

\section{Introduction}

\global\long\def\eps{\varepsilon}

\global\long\def\deelta{\nicefrac{1}{4} + \nicefrac{\varepsilon}{3}}

Let $x_1, \dots, x_N$ be numbers in the unit interval. 
The distribution of the pair correlations of these numbers 
is described by the function 
\begin{equation}\label{eq: definition pair correlation function}
R(s,N) = \frac{1}{N} 
\left\{ 1 \leq i \neq j \leq N:~\|x_i - x_j\| 
\leq s/N \right\}, \qquad s \geq 0,
\end{equation}
where $\| \cdot \|$ denotes the distance to the nearest integer. 
If for an infinite sequence $(x_n)_n$ 
we have 
\begin{equation}\label{eq: counting functing tends to 2s}
R(s,N) \to 2s
\end{equation} 
for all $s \geq 0$, then we say that 
the distribution of pair correlations 
is (asymptotically) \emph{Poissonian}. 
Note that a sequence of independent, 
identically distributed (i.i.d.) random points, 
picked from a uniform distribution on $[0,1]$, 
almost surely has Poissonian pair correlations. 
The term ``Poissonian'' comes from a similarity 
with the distribution of the spacings between 
points in a Poisson process, which, however, 
only becomes really meaningful 
when also considering higher correlations (triple, 
quadruple etc.) or so-called level	 spacings 
(which are in general much more difficult to handle than pair correlations).\\

The interest in such problems goes back to a paper of Berry and Tabor \cite{BT}, 
where they gave a conjectural framework for the distribution of energy spectra 
of integrable quantum systems (see \cite{mark} for a survey). Their model led to strong mathematical interest 
in distributional properties of spacing of sequences such as $(n \alpha)_n$ mod 1 
(corresponding to the ``harmonic oscillator'') and $(n^2 \alpha)_n$ mod 1 
(corresponding to the ``boxed oscillator''). 
The case of $(n \alpha)_n$ is easier to analyse; 
one can use considerations based on continued fractions 
to show that the pair correlations of this sequence 
cannot be Poissonian for any $\alpha$, since for some $N$ 
the initial segment $(\alpha, 2\alpha, \dots, N \alpha)$ mod 1 is too regularly spaced. 
The case of $(n^2 \alpha)_n$ is much harder and is far from being well-understood. 
It is conjectured that the pair correlations for this sequence should be Poissonian, 
unless $\alpha$ is very well approximable by rationals; however, 
there exist only some partial results in this direction 
(see for example \cite{hb,rsz,tru}). 
From the metric perspective, the situation is easier: 
it is known that the pair correlations of $(n^2 \alpha)_n$ mod 1 
are Poissonian for almost all $\alpha$, in the sense of Lebesgue measure. 
The same is true if $(n^2)_n$ is replaced by $(n^d)_n$ 
for some integer $d \geq 3$, 
or by an exponentially growing sequence $(a_n)_n$ of integers, 
see \cite{ruds,rz}. We denote this property by saying that these sequences 
have the \emph{metric pair correlation property}. 
In a recent paper \cite{all}, a connection was established between 
the question whether a sequence has the metric pair correlation property, 
and the asymptotic order of its so-called additive energy. 
Let $(a_n)_n$ be a sequence of distinct positive integers, 
let $A_N$ denote its initial segment $a_1, \dots, a_N$, and denote by
$E(A_N)$ the {\em additive energy} of $A_N$, which is defined as
\begin{equation}\label{eq: definition additive energy}
E(A_N)= \# \{n_1, n_2, n_3, n_4 \leq N:~a_{n_1} + a_{n_2} 
= a_{n_3} + a_{n_4} \}.
\end{equation}
Trivially, the additive energy is always between $N^2$ and $N^3$. Throughout this paper we will use the formulation ``the order of the additive energy of a sequence'', when more precisely speaking we mean the order (as a function of $N$) of the additive energy of the $N$ first elements of the sequence.\\

The main results of \cite{all} say that a sequence
has the metric pair correlation property if its additive energy is of order at most $N^{3-\varepsilon}$ for some $\varepsilon > 0$, 
while it does not have the metric pair correlation property if the additive energy exceeds
$c N^{3}$ for infinitely many $N$ for some constant $c>0$. 
This fits together very well with the examples from above, 
since sequences of the form $(n^d)_n$ for $d \geq 2$ 
and lacunary sequences are known to have very small additive energy, 
while the additive energy of the sequence $a_n = n, ~n \geq 1,$ 
is of the maximal possible order.\\

So the general philosophy is that a sequence has 
the metric pair correlation property as soon as its additive energy is 
slightly below the maximal possible order. 
However, a precise threshold is not known. 
Some results in this direction are:
\begin{itemize}
 \item The primes do not have the metric pair correlation property, 
 as shown by Walker \cite{walk}. 
The additive energy of the sequence of primes is roughly of order $\frac{N^3}{\log N}$.
 \item There exists a sequence having additive energy of order 
$\frac{N^3}{\log N \log \log N}$ which does not have 
the metric pair correlation property \cite{lach}.
 \item For every $\varepsilon>0$ there exists a sequence having additive energy 
of order $\frac{N^3}{\log N (\log \log N)^{1+\varepsilon}}$ 
which has the metric pair correlation property (unpublished, 
but not difficult to construct using methods from \cite{bcgw,lach}).
 \end{itemize}

These results indicate that there is a sort of transitional behaviour 
when the additive energy lies around the ``critical'' order of roughly 
$\frac{N^3}{\log N \log \log N}$. The methods used in \cite{bcgw,lach} 
indicate a close connection between this sort of question and problems 
from metric Diophantine approximation, where the classical theorem of Khintchine
gives a zero-one law in terms of the convergence/divergence of the series of measures of the target intervals 
(see for example \cite{harman}). 
It is tempting to speculate that a similar convergence/divergence criterion 
might also exist for the metric theory of pair correlations, 
where the crucial quantity is the additive energy of 
$(a_n)_n$. 
This idea was discussed in a recent paper of Bloom, Chow, Gafni, and Walker \cite{bcgw}, 
where they noted that there ``appears to be reasonable 
evidence to speculate a sharp Khintchine-type threshold, 
that is, to speculate that the metric Poissonian property 
should be completely determined by whether or not a certain sum 
of additive energies is convergent or divergent''. 
They raise the following problem, which they call the ``Fundamental Question'':

\begin{quote}Is it true that if $E\left(A_{N}\right)\sim N^{3}\psi\left(N\right)$,
for some weakly decreasing function $\psi:\mathbb{Z}_{\geq1}\rightarrow\left[0,1\right]$,
then $\left(a_{n}\right)_{n}$ is metric Poissonian if and only if
\begin{equation} 
\sum_{N\geq1}\psi\left(N\right)/N\label{eq: sum over normalized additive energies}
\end{equation}
converges?
\end{quote}

In the present paper, we show 
that the answer to the question above is negative,
and that the metric pair correlation property 
cannot be fully characterised in terms of the additive energy alone. 
For this purpose, we construct a sequence $(a_n)_n$ 
whose additive energy is of order roughly $N^3/(\log N)^{3/4}$,
and which \emph{does} have the metric pair correlation property. 
More precisely, we prove the following theorem.
\begin{thm}
\label{thm: main theorem}For every $\varepsilon>0$ 
there exists a strictly increasing sequence $\left(a_{n}\right)_{n}$ 
of positive integers which has the metric pair correlation property, 
and whose additive energy satisfies
\begin{equation}
E\left(A_{N}\right) \gg \frac{N^{3}}{(\log N)^{\nicefrac{3}{4}+\varepsilon}}. \label{eq: bound for the additive energy of truncations of the final sequences}
\end{equation}
\end{thm}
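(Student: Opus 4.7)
The strategy is to assemble $(a_{n})_{n}$ as an infinite union of arithmetic progressions (``blocks''), whose common differences are drawn from a thin pseudorandom set so that distinct blocks interact only trivially in additive equations. The blocks are the source of the large additive energy, while for a.e.\ $\alpha$ each block is diophantinely generic and forms a vanishing fraction of the whole sequence, which is what preserves the Poissonian pair-correlation statistics.

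\textbf{Construction and energy bound.} Fix stages $N_{k}$ with $N_{k+1}/N_{k}=O(1)$. At stage $k$, append a block $B_{k}$ consisting of an arithmetic progression of length $L_{k}\asymp N_{k}/(\log N_{k})^{\nicefrac{1}{4}+\varepsilon/3}$ and common difference $d_{k}$, drawn from a suitably pseudorandom thin set (for instance, primes in widely spaced dyadic windows), translated high enough to keep $(a_{n})_{n}$ strictly increasing. The within-block quadruples contribute $\sum_{j\leq k}L_{j}^{3}\gg L_{k}^{3}\gg N_{k}^{3}/(\log N_{k})^{\nicefrac{3}{4}+\varepsilon}$ to $E(A_{N_{k}})$; the pseudorandomness of the $d_{k}$'s keeps cross-block additive coincidences $a_{n_{1}}+a_{n_{2}}=a_{n_{3}}+a_{n_{4}}$ trivial, contributing only $O(N_{k}^{2})$. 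Geometric growth of $N_{k}$ and monotonicity of $E(A_{N})$ extend \eqref{eq: bound for the additive energy of truncations of the final sequences} to all $N$.

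\textbf{Metric pair correlation.} Split $R(s,N_{k})=R^{\mathrm{w}}+R^{\mathrm{c}}$ into within-block and cross-block contributions (the $\alpha$-dependence is implicit). For $R^{\mathrm{c}}$, the cross-block additive energy bound $O(N_{k}^{2})$ together with the $L^{2}$ Fourier argument from \cite{all} gives $\int_{0}^{1}|R^{\mathrm{c}}(s,N_{k},\alpha)-2s|^{2}\,d\alpha=O(1/N_{k})$; summability along $N_{k}$ and Borel--Cantelli yield $R^{\mathrm{c}}\to 2s$ for a.e.\ $\alpha$. For $R^{\mathrm{w}}$, the expectation is
\[
\frac{2s}{N_{k}^{2}}\sum_{j\leq k}L_{j}^{2}\;\ll\;\frac{1}{(\log N_{k})^{\nicefrac{1}{2}+2\varepsilon/3}}\;=\;o(1),
\]
and the deviation of the $j$-th block's count from its mean is a one-dimensional equidistribution quantity for $(m\alpha d_{j})_{m\leq L_{j}}$ mod $1$, which by a Fourier / Erd\H os--Tur\'an argument is $o(L_{j}^{2}/N_{k})$ outside an $\alpha$-set of measure $\ll L_{j}^{-c}$; the bad sets sum in $j$, so a further Borel--Cantelli gives $R^{\mathrm{w}}\to 0$ a.e. A monotonicity sandwich between $N_{k}$ and $N_{k+1}$ transfers the convergence to all $N$.

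\textbf{Main obstacle.} The key technical difficulty is keeping the cross-block additive energy at the Sidon-like level $O(N_{k}^{2})$: since each block is itself an AP of length comparable to $N_{k}$, equations such as $d_{j_{1}}(m_{1}-m_{3})=d_{j_{2}}(m_{4}-m_{2})$ admit many nontrivial solutions unless the $d_{j}$'s are jointly pseudorandom in a strong sense, and constructing such a thin set of common differences (and verifying the triviality of cross-block coincidences) is where the bulk of the combinatorial work lies. The exponent $\nicefrac{3}{4}$ emerges as the break-even point between the lower bound on $L_{k}$ needed to push $E(A_{N})$ past the conjectural Khintchine threshold and the upper bound on $L_{k}$ needed for the within-block mean $L_{k}^{2}/N_{k}^{2}$ and the Fourier exceptional measures to simultaneously vanish fast enough for the Borel--Cantelli metric argument to succeed.
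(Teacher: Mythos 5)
Your overall strategy and the identification of $3/4$ as the break-even exponent are in the right spirit, but two key technical claims fail, and both failures trace back to a structural choice: building the sequence \emph{entirely} out of arithmetic progressions puts the burden of producing the Poissonian limit $2s$ on the cross-block pairs. The paper instead interleaves dominating lacunary (geometric) blocks $P_G(j)$ of size $2^j$ with arithmetic blocks $P_A(j)$ of size $\approx 2^j/j^{1/4+\varepsilon/3}$; the convergence $R(GG)\to 2s$ is then an immediate consequence of the positive result of \cite{all}, since lacunary sequences have additive energy $\ll N^{2+o(1)}$, and the arithmetic parts only have to be shown negligible. Your claim that the cross-block additive energy is $O(N_k^2)$ is false: for two blocks near the top level, with lengths $L\asymp N/(\log N)^{1/4+\varepsilon/3}$ and coprime step sizes $d_1,d_2\asymp(\log N)^{1/4}$, the equation $d_1(m_1-m_3)=d_2(m_4-m_2)$ with $m_i\in[0,L]$ has, for each $(m_3,m_4)$, the one-parameter family of solutions $(m_1,m_2)=(m_3+hd_2,\,m_4-hd_1)$, $|h|\ll L/d$; thus a typical cross-block difference has $\asymp L/d$ representations and $\sum_u r(u)^2\asymp L^3/d\asymp N^3/(\log N)^{1+\varepsilon}\gg N_k^2$. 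The resulting variance is $\ll (\log N)^{-(1+\varepsilon)}$, not $O(1/N_k)$, which is exactly why the paper has to take the sparse subsequence $N_m=\lfloor\exp(m^{1/(1+\varepsilon/2)})\rfloor$ in the Borel--Cantelli step; a geometric subsequence, as you propose, does not make the variances summable. (Also note that with all elements coming from the $B_k$'s one has $N_k=\sum_{j\le k}L_j$ and $L_k=o(N_k)$, so $N_{k+1}/N_k\to1$ and ``geometric growth of $N_k$'' is not attainable.)

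The more essential gap is the within-block term, which cannot be controlled by an Erd\H{o}s--Tur\'{a}n/discrepancy argument with exceptional $\alpha$-sets of measure $\ll L_j^{-c}$. The count $\#\{q\le L_j:\|d_jq\alpha\|\le s/N\}$ clusters heavily: a single $q_0\le L_j/(\log N)^\beta$ with $\|d_jq_0\alpha\|\le s/(N(\log N)^\beta)$ already contributes $\gg(\log N)^\beta$ multiples, which is the target threshold, and for a.e.\ $\alpha$ the discrepancy of $(q\alpha)_q$ is of order at least $\log Q$, already much larger than $(\log N)^{1/4+\varepsilon/3}$, so no discrepancy bound can close this. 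The paper's Lemma \ref{prop: controlling the number of high energy multiples which are counted } instead proves the sharp bound $\#M_j\ll j^{1/4}$ by combining Legendre's theorem (such a $q_0$ forces $d_jq_0$ to be a multiple of a convergent denominator $q_n$), the Borel--Bernstein theorem, and --- the decisive ingredient missing from your proposal --- the fact that $d_j$ is a prime of size $\asymp j^{1/4}$ together with the Borel--Cantelli estimate of Lemma \ref{lem:div}, which guarantees that for a.e.\ $\alpha$ the relevant convergent denominators $q_n$ are \emph{not} divisible by $d_j$. This is what produces the crucial gain of a factor $1/d_j$ when passing from $\|d_jq\alpha\|$ to $\|q\alpha\|$, turning the Khintchine-divergent condition $\|q\alpha\|\le Q^{-1}(\log Q)^{-3\beta}$ into the Khintchine-convergent one $\|q\alpha\|\le Q^{-1}(\log Q)^{-(3\beta+\gamma)}$ with $3\beta+\gamma=1+\varepsilon>1$ (here $\beta=1/4+\varepsilon/3$, $\gamma=1/4$). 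Without the primality mechanism the within-block contribution is not shown to vanish, and this is the heart of the whole proof --- it is exactly why the answer to the ``Fundamental Question'' is negative rather than positive.
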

Note that the additive energy of the sequence from the conclusion of Theorem 1 is significantly larger 
than the putative threshold, which is rather around $N^3/\log N$. Furthermore, as the examples above showed, the additive energy of a sequence which does \emph{not} have the metric pair correlation property can be of asymptotic order $N^3$, but it can also be of asymptotic order roughly $N^3/\log N$. Thus the metric theory of pair correlations 
cannot simply be reduced to a convergence/divergence criterion 
in terms of the additive energy alone. Instead, the picture 
is more complicated and looks as follows:
\begin{itemize}
 \item If the additive energy is below a certain threshold, 
then the sequence does have the metric pair correlation property. 
 \item If the additive energy is above a certain threshold 
(for infinitely many $N$), then the sequence cannot have the metric pair correlation property. (This threshold is different from the one in the point above.)
 \item Between these upper and lower thresholds there is a transition zone, 
where knowing the additive energy alone is not sufficient 
to determine the metric pair correlation behaviour of the sequence. 
Thus, in this range the metric pair correlation property is determined 
by some additional arithmetic properties of the sequence.
\end{itemize}

We note that while our result 
says that the metric pair correlation property cannot be characterised 
in terms of the additive energy alone, it leaves the problem of finding 
some other way of characterising the metric pair correlation property 
in terms of some arithmetic properties of $(a_n)_n$. It is likely that there 
is a zero-one law in the metric theory of pair correlations, 
but actually even this is not known. Also, our result still leaves 
questions concerning the quantitative connection between additive energy 
and the metric theory of pair correlations. For example, is it possible that 
a sequence having additive energy of order $N^3/(\log \log N)$ 
also has the metric pair correlation property?
In the other direction, is it possible that the additive energy is of order $N^3/(\log N)^2$ 
and the sequence does not have the metric pair correlation property?\footnote{While the present paper was being refereed, a paper of Bloom and Walker addressing this question appeared on the arXiv. They proved that there exists an (unspecified) constant $C>1$ such that a sequence has the metric pair correlations property whenever its additive energy is of asymptotic order at most $N^3/(\log N)^C$, see \cite{bw}.}
Closing the gaps in our knowledge in this field would be very desirable, 
as phenomena from both additive combinatorics 
and Diophantine approximation seem to be at work here.

\section{Preliminaries}

\subsection{Construction of the sequence}

We will construct our sequence $(a_n)_n$ as the concatenation 
of successive ``blocks''. All these blocks are either 
finite arithmetic or finite geometric progressions. 
The geometric blocks will contain the majority 
of the numbers which constitute the final sequence, 
but they will not be responsible for making 
the additive energy of the final sequence large, 
since geometric progressions always have small additive energy. 
The contribution of the geometric blocks 
to the distribution of pair correlations will be ``random'', in accordance with the well-known heuristics that lacunary systems exhibit properties which are also shown by independent random systems (see for example \cite{rz} for this phenomenon in connection with pair correlations, and \cite{a_survey} for the wider context). In our context ``random'' behaviour means Poissonian behaviour of the pair correlations, so the geometric blocks are responsible that the final sequence is metric Poissonian. The arithmetic blocks contain only a minority of all the elements of the final sequence, while being responsible for making the additive energy large. The main task will be to show that while these arithmetic blocks boost the additive energy, their contribution to the distribution of pair correlations is asymptotically negligible. To control the contribution of arithmetic blocks we will use tools from metric Diophantine approximation.\\

The key point of the construction lies in the fact that the arithmetic blocks which are used in the construction 
have different prime numbers as their step sizes.\footnote{We will need to use a ``recycling
process'' for the prime moduli, since there are not enough different primes of the appropriate size available to have a different step size for each arithmetic block.} The fact that the step sizes of the arithmetic blocks are prime numbers will play a key role in two parts of the proof. On the one hand, using the theory of continued fractions at some point we will be led to counting the number of solutions of a certain equation; the assumption that the step size is a (large) prime will imply that we only have to count solutions which are a multiple of that prime, thus effectively reducing the number of solutions. This will allow us to control the contribution which comes from elements contained within one and the same arithmetic block. On the other hand, to control the contribution of the interaction of elements from two different arithmetic blocks, we will use a variance estimate which boils down to counting the maximal number of solutions of a simple Diophantine equation. Again, the fact that the moduli are (different) primes will reduce the maximal number of solutions of the equation. We will add some further comments on the heuristic reasoning behind the proof after first formulating precisely the way in which our sequence is constructed.\\

\paragraph*{Notation.} We fix some $\varepsilon>0$. Throughout the rest of this paper 
we assume w.l.o.g.\ that $\varepsilon$ is ``small'', say $\varepsilon  <1/100$. 
We will use Landau notation $o,\,\mathcal{O}$, and Vinogradov symbols $\ll,\gg$, 
with their usual meaning in analytic number theory 
(i.e. $f\ll g$ meaning that $|f|$ is bounded
by a constant times $|g|$, for all possible arguments). 
The symbol $f \asymp g$ means that $f \ll g$ as well as $f \gg g$. 
If the implied constant depends on some parameter, we will
indicate the dependence by a corresponding subscript. 
However, we will not indicate any dependence on $\varepsilon$, 
since throughout the proof $\varepsilon$ is assumed to be fixed. 
We write $\lambda $ for the Lebesgue measure on $\mathbb{R}$.
Finally, we write $\lfloor \cdot \rfloor$ for the integer part of a real number.\\

In Lemma \ref{lemma_1} below we construct the moduli of the arithmetic blocks.

\begin{lem} \label{lemma_1}
\label{lem: defining the moduli}
There exist an index $j_0 \geq 1$ and a sequence $(m_j)_{j \geq j_0}$ of primes such that 
\begin{equation}
m_{j} \asymp j^{\nicefrac{1}{4}}, \label{eq: regime of the moduli quantified by their level}
\end{equation}
and such that $m_{j} \neq m_{i}$ whenever $j-3\log j< i < j$, for all $i,j \geq j_0$.
\end{lem}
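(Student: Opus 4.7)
The plan is to build $(m_j)_{j\geq j_0}$ by a greedy procedure, choosing at each step a prime of the right size that avoids the values used in the preceding window of length $3\log j$.

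First I would fix constants $0<A<B$ (say $A=1$, $B=2$) and let $P_j$ denote the set of primes in the interval $[Aj^{\nicefrac{1}{4}},\,Bj^{\nicefrac{1}{4}}]$. By the Prime Number Theorem (Chebyshev-type bounds suffice),
\[
|P_j| \;=\; \pi\!\left(Bj^{\nicefrac{1}{4}}\right)-\pi\!\left(Aj^{\nicefrac{1}{4}}\right)\;\gg\;\frac{j^{\nicefrac{1}{4}}}{\log j}.
\]
Since $j^{\nicefrac{1}{4}}/\log j$ grows much faster than $\log j$, there exists $j_0\geq 1$ such that $|P_j|>3\log j$ for every $j\geq j_0$. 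Every element of $P_j$ automatically satisfies $m\asymp j^{\nicefrac{1}{4}}$ with implied constants $A$ and $B$.

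Next I would define $(m_j)_{j\geq j_0}$ by induction on $j$. Set $m_{j_0}$ to be any prime in $P_{j_0}$. For $j>j_0$, once $m_{j_0},\dots,m_{j-1}$ have been defined, consider the ``forbidden'' set
\[
F_j \;:=\; \bigl\{\,m_i\;:\;\max(j_0,\,\lceil j-3\log j\rceil+1)\leq i\leq j-1\,\bigr\},
\]
which contains at most $3\log j$ primes. By the cardinality comparison above, $P_j\setminus F_j$ is nonempty, so we may choose $m_j$ to be any prime in it. Then $m_j\in P_j$ yields $m_j\asymp j^{\nicefrac{1}{4}}$, and $m_j\notin F_j$ is exactly the required distinctness property $m_j\neq m_i$ for all $i$ with $j-3\log j<i<j$.

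There is essentially no obstacle here: the only nontrivial input is the prime-counting inequality $|P_j|\gg j^{\nicefrac{1}{4}}/\log j$, which is standard, and the rest is a one-line greedy induction. The construction is not unique; this is unimportant, since Lemma \ref{lemma_1} only asserts the existence of such a sequence of moduli, and all subsequent arguments depend only on the two quantitative properties stated in its conclusion.
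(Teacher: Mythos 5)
Your greedy construction is correct and does establish the lemma, but it is a genuinely different route from the paper's. The paper builds the sequence explicitly in blocks: for $16^{d}\leq j<16^{d+1}$ it fixes the first $d^{2}$ primes $p_{d,1}<\cdots<p_{d,d^{2}}$ in $(2^{d},2^{d+1})$ and sets $m_{j}:=p_{d,r(j)}$ with $r(j)\equiv j\pmod{d^{2}}$; then $m_{j}\asymp 2^{d}\asymp j^{1/4}$ is immediate, and the distinctness in a window of length $3\log j$ follows because $d^{2}\asymp(\log j)^{2}$ is much larger than $3\log j$ (with the block boundaries being harmless since primes from consecutive dyadic intervals are automatically distinct). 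Your argument instead works locally: you observe that the pool $P_{j}$ of primes near $j^{1/4}$ has size $\gg j^{1/4}/\log j$, which eventually dominates the size $\leq 3\log j$ of the forbidden window, so a greedy choice is always available. Both proofs rest on the same prime-counting input (Chebyshev/PNT bounds in dyadic-scale intervals), and both are perfectly adequate; the paper's version is fully deterministic and pins the $m_{j}$ down explicitly, whereas yours is shorter and more modular, making the ``you only need the pool to beat the window'' mechanism transparent and easier to adapt if one wanted a longer exclusion window. One cosmetic point: the lower endpoint of your forbidden range should be $\lfloor j-3\log j\rfloor+1$ (equivalently, ``all integers $i$ with $j-3\log j<i<j$'') rather than $\lceil j-3\log j\rceil+1$, which omits one admissible index when $j-3\log j$ is non-integral; this is an off-by-one slip and does not affect the cardinality count $|F_{j}|\leq 3\log j$ on which the argument rests.
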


Before proving the lemma, we briefly explain its meaning. 
The numbers $m_{j}$ will be the moduli of the arithmetic 
progressions in our construction. The first condition 
in the lemma says that these moduli are of asymptotic 
order roughly $j^{\nicefrac{1}{4}}$. The second condition 
guarantees that the step size of the $j$-th arithmetic 
progression is different from the step size of the 
$i$-th arithmetic progression, whenever $i$ is ``close'' to $j$. 
So arithmetic blocks whose indices are close by can never have the same step size, which will guarantee that there is no undesired interaction between such blocks (this will play a crucial role in the proof of Lemma \ref{lem_AA} below). On the other hand, if $i$ and $j$ are not close to each other, then the corresponding arithmetic blocks are allowed to have the same step size --- this is the ``recycling process'', which was mentioned in the preceding footnote, and which is necessary since the step sizes of the blocks grow more slowly than the indices of the blocks themselves. However, this will not cause any problems since the block sizes increase very quickly and any interaction between a block and some other block of much smaller cardinality will always be negligible.

\begin{proof}[Proof of Lemma \ref{lemma_1}]
To define the value of $m_j$ for all indices $j$ in the range $16^{d} \leq j < 16^{d+1}$,
$d \geq 0$, we note that the number of primes in the range 
\begin{equation} \label{prime_int}
\left(2^{d}, 2^{d+1}\right)
\end{equation}
is certainly at least $d^2$ for all sufficiently large $d$, by the prime number theorem. So assume that $d$ is ``large'', and let $p_{d,1}<\ldots<p_{d,d^2}$ denote the first $d^2$ primes 
in the interval \eqref{prime_int}. We set
\begin{equation} \label{constr_p}
m_{j}\coloneqq p_{d,r\left(j\right)}, \qquad 16^{d} \leq j < 16^{d+1},
\end{equation}
where $r\left(j\right)$ is the unique remainder when reducing $j$ mod $d^2$. Then (\ref{eq: regime of the moduli quantified by their level}) holds since $2^{d} = (16^d)^{1/4} \asymp j^{\frac{1}{4}}$. Furthermore, it can easily be seen that the second assertion of the lemma holds as well for sufficiently large $d$, since \eqref{constr_p} guarantees that $m_j$ cannot equal $m_i$ whenever $|i-j|$ is small. Observe here that $d^2$ is of order roughly $(\log j)^2$, and thus much larger than $3 \log j$ for all sufficiently large $j$ and $d$.
\end{proof}

Let $j_{0}$ be as in Lemma \ref{lem: defining the moduli}.
%
For $j \geq j_0$ we recursively define sets $P_{G}\left(j\right)$ and $P_{A}(j)$ by setting
\begin{eqnarray}
\label{eq: definition of geometric block}
P_{G}\left(j\right) & \coloneqq& \Bigl\{ 
2^{(\max P_{A}(j-1))}
+3^{j^{h}} : h=0,\ldots,2^{j}-1\Bigr\}, \label{def:geo} \\
P_{A}\left(j\right) & \coloneqq& \Bigl
\{ 2^{(\max P_{G}(j))}+m_j h:  h=0,\ldots,\bigl
\lfloor2^{j}/j^{\deelta}\bigr\rfloor\Bigr\}. \label{def:ari}
\end{eqnarray}
To make the construction well-defined we need to specify the initial value $\max P_{A}(j_0-1)$, which is necessary for \eqref{eq: definition of geometric block} in the case $j=j_0$; it does not matter what we choose, but let us agree that this quantity should be read as 1, and that accordingly $P_{G}\left(j_0\right)\coloneqq\bigl\{2
+3^{j_0^{h}}: h=0,\ldots,2^{j_0}-1\bigr\}$.\\

The set $P_G(j)$ is a (shifted) geometric progression for each $j$, while the set $P_A(j)$ is a (shifted) arithmetic progression for each $j$. The sets $P_G(j)$ and $P_A(j)$ 
are arranged in increasing order; more precisely, we have
\begin{equation} \label{inequ}
P_G(j) < P_A(j) < P_G(j+1) 
\end{equation}
for all $j\geq j_0$, where the symbol ``$<$'' means that every element of the set on the right side exceeds every element of the set on the left side.\\

The exponential factors $2^{(\dots)}$ which appear in the definitions of all the sets $P_G$ and $P_A$ are quite arbitrary; what matters is only that the smallest element of $P_A(j)$ is much larger than the largest elements of $P_G(j)$, and so on. Thus the respective sets in our construction are not only ordered as shown by \eqref{inequ}, but there actually are huge gaps separating one item in this chain of inequalities from the next.\\ 

Finally, we specify the sequence $(a_n)_{n}$ by defining $a_{n}=a_{n}\left(\varepsilon\right)$ 
as the $n$-th (smallest) element of
\[
\bigcup_{j\geq j_0} \bigl(P_{G}\left(j\right)\cup P_{A}\left(j\right)\bigr),
\]
for all $n \geq 1$. So $(a_n)_{n}$ contains all the numbers which are contained in $P_G(j)$ or $P_A(j)$ for some $j$, sorted in increasing order. We claim that the additive energy of this sequence is as large as specified in \eqref{eq: bound for the additive energy of truncations of the final sequences}, and that the pair correlations of $(a_n \alpha)_{n}$ mod 1 are Poissonian for almost all $\alpha$.\\

\subsection{The heuristic behind the construction}

Before turning to the proof of Theorem \ref{thm: main theorem}, we want to explain the heuristic behind the construction of the sequence $(a_n)_{n}$. In particular, we want to show why our construction allows to go beyond the alleged ``Khintchine threshold''. Note that the distribution of the pair correlations of $(a_n \alpha)_{n}$ mod 1 depends not so much on the sequence $(a_n)_{n}$ itself, but rather on the set of differences $\{a_n - a_m\}_{m,n}$ (as does the additive energy). Thus it is this difference set that has to be controlled.\\

Obviously the difference set of a finite arithmetic progression has a very special structure; it is essentially an arithmetic progression itself, and the cardinality of the difference set of an arithmetic progression is small while the additive energy is large. More precisely, the positive part of the difference set of an arithmetic progression with step size $d$ and length $M$ is itself an arithmetic progression with step size $d$, and length $M-1$, and each of the elements of the difference sets has at least $1$ and at most $M-1$ representations as a difference of elements of the original set. In our construction we combine arithmetic progressions with different prime moduli $m_{j}$. The number of such arithmetic progressions and their respective length is so large that they boost the additive energy of the total sequence; in contrast, we have to show that their contribution to the distribution of pair correlations is asymptotically negligible. In our setting, at the $j$-th building block we have constructed roughly $N \approx 2^j$ elements of our sequence $(a_n)_{n}$. Each arithmetic progression at this level consists of roughly $\approx N / (\log N)^{1/4+\varepsilon/3}$ elements. One can easily check that this leads to the required lower bound for the additive energy. The size of the prime moduli $m_j$ is roughly $(\log N)^{1/4}$.\\

To make sure that the contribution which one of these arithmetic progressions makes to the pair correlations is asymptotically negligible, we have to show (roughly speaking) that
\begin{equation} \label{exc}
\frac{N}{(\log N)^{\deelta}}  \cdot \# \Big\{ q \leq N / (\log N)^{\deelta}: ~\|m_{j} q \alpha\| \leq \frac{1}{N} \Big\} = o(N)
\end{equation}
for ``typical'' $\alpha$ in the sense of Lebesgue measure (where for simplicity we took $1/N$ rather than $s/N$ for the length of the test interval). Here the factor $N/(\log N)^{\deelta}$ on the very left arises as the maximal number of representations which the number $m_{j} q$ has as a difference of two elements of the arithmetic progression with step size $m_{j}$, and the upper bound $q \leq N / (\log N)^{\deelta}$ which restricts the maximal size of $q$ comes from the length of the arithmetic blocks. The estimate \eqref{exc} is true as long as the cardinality of the set on the left is asymptotically negligible in comparison to  $(\log N)^{\deelta}$.\\

Essentially, the cardinality of this set only exceeds $(\log N)^{\deelta}$ when there is a 
$$q \in \{1, \dots, N/(\log N)^{\nicefrac{1}{2} 
+ \nicefrac{2 \varepsilon}{3}}\}$$ 
such that $\|m_{j} q \alpha\|$ is less than 
$1/(N  (\log N)^{\deelta})$, so that for the next 
$(\log N)^{\deelta}$ multiples of $q$ we also have 
$\|m_{j} q \alpha\| \leq 1/N$, and so that all 
these multiples are still smaller than 
$N/(\log N)^{1/4+\varepsilon}$. 
Accordingly, one has to check if for typical $\alpha$ 
one should expect that there is a $q$ such that
\begin{equation*} 
\|m_{j} q \alpha\| \leq \frac{1}{N (\log N)^{\deelta}}, \qquad 1 \leq q \leq N/(\log N)^{\nicefrac{1}{2} + \nicefrac{2 \varepsilon}{3}}.
\end{equation*}
Writing $N/(\log N)^{\nicefrac{1}{2} 
+ \nicefrac{2 \varepsilon}{3}}=:Q$, the inequality above essentially becomes
\begin{equation} \label{Q_ri}
\|m_{j} q \alpha\| \leq \frac{1}{Q (\log Q)^{3/4 + \varepsilon}}, \qquad 1 \leq q \leq Q.
\end{equation}
By Khintchine's convergence/divergence criterion this inequality looks like it should have infinitely many solutions for ``typical'' alpha, since the expression on the right-hand side is not summable as a function of $Q$. However, one major aspect is missing. The right-hand side of \eqref{Q_ri} is so small that the solutions of this inequality can be explicitly characterised by continued fraction theory; all solutions necessarily come from best approximations to $\alpha$. We will show that (for typical alpha) we may assume that the denominators of best approximations to alpha are not divisible by the prime $m_j$; thus the number $m_j q$ cannot be a best approximation denominator itself. Rather, it must be the multiple of $m_j$ and of a best approximation denominator, and accordingly for $q$ itself to satisfy \eqref{Q_ri} we must have
\begin{equation} \label{Q_ri_2}
\|q \alpha\|= \frac{\|m_j q \alpha\|}{m_j} \leq \frac{1}{m_j Q (\log Q)^{3/4 + \varepsilon}} \approx \frac{1}{Q (\log Q)^{1+\varepsilon}}, \qquad 1 \leq q \leq Q.
\end{equation}
The right-hand side of \eqref{Q_ri_2} is summable as a function of $Q$, and thus by Khintchine's criterion we should only expect finitely many solutions for typical alpha. It turns out that this heuristic reasoning can be turned into an actual proof.\\

We emphasise again that the fact that $m_j$ always is a prime played a crucial role in this reasoning, together with the fact that we may assume that the denominators of best approximations are not divisible by $m_j$ (we will prove this fact in Lemma \ref{lem:div} below). Another crucial aspect is to show that two different arithmetic blocks do not ``interact'' in an undesired way; that is, we have to show that the difference sets of these respective progressions do not overlap too much. For this it will again be important that all the moduli are (different) primes, since then a fixed integer can only show up in the difference set of two arithmetic progressions if it is a product of the two primes which constitute the respective step sizes. This will be proved in the form of a variance bound in Lemma \ref{lem_AA}.\\
%

Finally, let us remark why it is not possible to obtain 
even larger additive energy with such a construction. 
Obviously, the additive energy is increased when 
the length of the arithmetic blocks is increased, 
so we might try to do that. Furthermore, as \eqref{Q_ri_2} shows, 
increasing the size of the prime moduli $m_j$ 
would also improve the argument, 
so we might try to do that as well. 
So let us assume that the length 
of the arithmetic blocks is changed from roughly 
$N/(\log N)^{\deelta}$ to $N/(\log N)^\beta$ 
for some $\beta$, and that the size 
of the prime moduli $m_j$ is changed from roughly 
$(\log N)^{1/4}$ to $(\log N)^\gamma$ 
for some $\gamma$. If we do so, then instead of 
\eqref{exc} we will have to show that
\begin{equation} \label{exc2}
\frac{N}{(\log N)^{\beta}} \# \Big\{ q \leq N / (\log N)^{\beta}: ~\|m_{j} q \alpha\| \leq \frac{1}{N}  \Big\} = o(N)
\end{equation}
for ``typical'' alpha, with $m_j$ of size roughly $m_j \approx (\log N)^\gamma$. Now recall that Legendre's theorem from continued fraction theory allows us to characterise the solutions $(a,b)$ to $|b \alpha - a | < 1/(2b)$. We want to use this for $b = m_j q$, and thus in our application $b$ might be as large as $N (\log N)^{\gamma-\beta}$. The term $1/N$ in \eqref{exc2} is preassigned, since it comes from the definition of pair correlations. So in order to apply Legendre's theorem we have to make sure that $N (\log N)^{\gamma-\beta} \ll N$, which implies $\gamma \leq \beta$. This restricts the size of the prime moduli (in terms of the length of the arithmetic progressions). When we carry out the heuristic reasoning above with general parameters $(\beta,\gamma)$ instead of $(1/4+\varepsilon/3,1/4)$, then instead of \eqref{Q_ri_2} we will arrive at
\begin{equation}
\|q \alpha\| \leq \frac{1}{Q (\log Q)^{3 \beta+\gamma}}, \qquad 1 \leq q \leq Q.
\end{equation}
The right-hand side is summable if $3 \beta + \gamma >1$. Since the additive energy is maximised by taking $\beta$ as small as possible, and since we already know that we need to take $\gamma \leq \beta$, the minimal permissible value for $\beta$ is restricted by the requirement $\beta > 1/4$. This is the choice of parameters which is made in our construction. One can also show that our choice of parameters is optimal with respect to the conditions imposed by the variance bound in Lemma \ref{lem_AA}, which also requires that $3 \beta + \gamma >1$. Thus some significant new ideas would be necessary to further increase the additive energy while preserving the metric pair correlation property.

\subsection{A useful partition, and organisation of the paper} \label{split}
The following partitioning underpins the remaining part of this paper.
For doing so, we need some notation from additive combinatorics:
We write $X-Y$ for the difference set 
\[
X-Y\coloneqq\left\{ x-y:\,x\in X,\,y\in Y\right\} 
\]
of two sets $X,Y\subseteq\mathbb{Z}$. By $\#X$ we denote the cardinality
of $X$. Furthermore, we write $\mathrm{r}_{X-Y}$ 
for the number of ways in which $d\in\mathbb{Z}$ can be represented as a
difference of elements of $X,Y\subseteq\mathbb{Z}$, that is,
\begin{equation}\label{eq: definition of rep func}
\mathrm{r}_{X-Y}\left(d\right)
\coloneqq\#\left\{ \left(x,y\right)\in X\times Y:\,d=x-y\right\} .
\end{equation}
If no confusion can arise, we will simply write $\mathrm{r}\left(d\right)$
for $\mathrm{r}_{X-Y}\left(d\right)$. Recall that trivially 
\begin{equation*}\label{eq: trivial estimate for rep func}
\mathrm{r}_{X-Y}\left(d\right)\leq\min\left\{ \#X,\#Y\right\}.
\end{equation*}
Moreover, let $X^{+}\coloneqq X\cap\mathbb{Z}_{\geq1}$ denote the
set of positive elements of a set $X\subseteq\mathbb{Z}$. Since $A_{N}-A_{N}$
is symmetric around the origin, we can confine attention to its positive
part.\\

Assume that $d \geq 0$ is the difference of two elements of $A_N$, that is, $d=x-y$. We will classify these differences, according to the origin of $x$ and $y$. More precisely, we will distinguish between the following cases.
\begin{itemize}
\item Case (GG): $x$ and $y$ both come from geometric blocks, that is, $x,y \in \bigcup_j P_G(j)$.
\item Case (AG): $x$ comes from an arithmetic, and $y$ comes from a geometric block, that is, $x \in \bigcup_j P_A(j)$ and $y \in \bigcup_{j} P_G(j)$. Or, reciprocally, $x$ comes from a geometric block and $y$ comes from an arithmetic block.
\item Case (AA\textsubscript{diff}): $x$ and $y$ come from \emph{different} arithmetic blocks, that is, $x \in P_A(j_1)$ for some $j_1$ and $y \in P_A(j_2)$ for some $j_2$, such that $j_1 \neq j_2$.
\item Case (AA\textsubscript{same}): $x$ and $y$ come from \emph{the same} arithmetic block, that is, $x,y \in P_A(j)$ for some $j$.
\end{itemize}

We write $\mathcal{D}_N \left(GG \right)$ for the set of those $d$ in the difference set $(A_N-A_N)^+$ which can be represented as Case (GG). In a similar way, we define $\mathcal{D}_{N} \left(AG\right),\mathcal{D}_{N}\left(AA\textsubscript{diff}\right)$, and $\mathcal{D}_{N}\left(AA_\textsubscript{same}\right)$.\\

The function $R$ which was defined in (\ref{eq: definition pair correlation function}) can be decomposed in a similar way in the form
\begin{equation} \label{decom:R}
R = R\left(GG\right)+R\left(AG\right) + R\left(AA\textsubscript{diff}\right)+R\left(AA\textsubscript{diff}\right).
\end{equation}
For this decomposition, we set
\begin{equation}\label{eq: definition of R upper}
R\left(GG\right)\coloneqq R
\left(GG,\alpha,s,N\right)  \coloneqq\frac{2}{N}
\sum_{d\in \mathcal{D}_{N}\left(GG \right)}
\mathrm{r}\left(d\right)I_{s,N}\left(d\alpha\right), \quad 
I_{s,N}\left(x\right) \coloneqq
\begin{cases}
1 & \left\Vert x\right\Vert \leq s/N\\
0 & \mathrm{otherwise,}
\end{cases}
\end{equation}
where $r(d)$ counts 
only the number of Case (GG) representations which 
$d \geq 1$ has in the form $d=x-y$ such that $x,y \in A_N$. 
Note that the factor 2 in \eqref{eq: definition of R upper}, 
which is not present in \eqref{eq: definition pair correlation function}, 
comes from the fact that we restricted ourselves 
to the positive part of the difference set $A_N-A_N$. 
Similarly, we define $R\left(AG\right),
~R\left(AA\textsubscript{diff}\right)$ 
and $R\left(AA\textsubscript{same}\right)$, 
where the function $r(d)$ is instead restricted 
to representations of $d$ as Case (AG), 
Case (AA\textsubscript{diff}), 
and Case (AA\textsubscript{same}), respectively.\\

By using the same methods as in \cite{all}, one can easily conclude that 
\begin{equation}
R\left(GG,\alpha,s,N\right)
\rightarrow2s\label{eq: counting of the geometric parts is Poissonian}
\end{equation}
as $N\rightarrow\infty$, for almost all $\alpha\in\left[0,1\right]$
and each $s>0$. This follows from the fact that geometric progressions 
have small additive energy, and the fact that the cardinality 
of the geometric blocks is dominant over the total cardinality 
of the arithmetic blocks which implies that $1/N$ really is the 
correct normalisation factor such that 
$R\left(GG\right)$ converges as desired for $N \to \infty$.\\

Thus it remains to show that all the remaining terms 
$R\left(AG\right),R\left(AA\textsubscript{diff}\right)$ 
and $R\left(AA\textsubscript{same}\right)$
vanish in the limit $N \to \infty$, for almost all $\alpha$.\\

The outline of the next sections is as follows. First, in Section \ref{sec: small differences}, we analyse the contribution of $R\left(AA\textsubscript{same}\right)$. Here Diophantine approximation 
determines the counting.\footnote{The mechanism furnishing these estimates 
is of a somewhat combinatorial nature,
and related to so-called {\em{Bohr sets}}. The combinatorial nature of these sets also plays a key role in a recent paper of Chow, cf.\ \cite{chow}.} Then, in Section \ref{sec: large differences}, we prove variance estimates to control $R\left(AG\right)$ and $R\left(AA\textsubscript{diff}\right)$.
Once these steps are completed, in Section \ref{sec: proof of main thm} we use
the Borel--Cantelli lemma with a 
sandwiching argument to finish the proof of Theorem \ref{thm: main theorem}.
\section{Analysing the contribution of the small differences}\label{sec: small differences}

Before proceeding further, we need to recall some notions and results
about continued fractions. For a (possibly finite) sequence 
$\left(\alpha_{i}\right)_{i}$
of strictly positive integers, we denote by 
\[
\alpha\coloneqq\left[\alpha_{1},\alpha_{2},\ldots\right]
=\frac{1}{\alpha_{1}+\frac{1}{\alpha_{2}+\frac{1}{\ddots}}}
\]
the associated (possibly finite) continued fraction in the unit interval
$\left[0,1\right]$. Moreover, let $p_{n}/q_{n}$ denote
the $n$-th convergent to $\alpha$. Then, the following are well-known
facts, cf.\ for instance \cite[Ch.1]{bugeaud}.
\begin{enumerate}
\item Legendre's theorem: If $\nicefrac{a}{b}$ 
is a fraction with 
$$
\left|\alpha-\frac{a}{b}\right| < \frac{1}{2b^{2}},
$$
then $\nicefrac{a}{b}$ is a convergent to $\alpha$.
\item We have
\begin{equation}
\left|\alpha-\frac{p_{n}}{q_{n}}\right|\asymp\frac{1}{\alpha_{n}q_{n}^{2}}, \label{eq: classic approximation estimates for quality of coninued fractions}
\end{equation}
where the implied constants are independent of $\alpha$.
\item Borel--Bernstein theorem: Let $B\coloneqq\left(b_{n}\right)_{n}$ be
a sequence of (strictly) positive real numbers, and consider the series
\begin{equation}
\sum_{n\geq1}\frac{1}{b_{n}}.\label{eq: series involving the comparison of growth in Borel-Bernstein theorem}
\end{equation}
If $V_{B} \subset [0,1]$ denotes the set of 
numbers $\alpha=\left[\alpha_{1},\alpha_{2},\ldots\right]$
satisfying $\alpha_{n}\leq b_{n}$ 
for all sufficiently large $n\geq1$, then 
\[
\lambda\left(V_{B}\right)=\biggl\{\begin{array}{l}
1\quad\mathrm{if}~\eqref{eq: series involving the comparison 
of growth in Borel-Bernstein theorem}\,\mathrm{~converges,}\\
0\quad\mathrm{if}~\eqref{eq: series involving the comparison 
of growth in Borel-Bernstein theorem}\,\mathrm{~diverges.}
\end{array}
\]
\end{enumerate}

\begin{lem} \label{lem:div}
Let $(m_j)_{j \geq 1}$ be the sequence 
of primes from Lemma \ref{lemma_1}, 
which was used in \eqref{def:ari} 
for the construction of our sequence. 
Then for almost all $\alpha \in [0,1]$ 
there exist only finitely many pairs of indices $(j,n)$ 
such that the prime $m_j$ divides $q_n$, 
and such that additionally $q_n / m_j \in [2^j/j^2,2^j]$, 
where $q_n$ is the denominator of a convergent to $\alpha$.
\end{lem}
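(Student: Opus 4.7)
My plan is a Borel--Cantelli argument. For each pair $(j, n)$ of positive integers with $j \geq j_0$, set
\[
A_{j,n} := \{\alpha \in [0,1] : m_j \mid q_n(\alpha), \ q_n(\alpha)/m_j \in [2^j/j^2, 2^j]\}.
\]
If $\sum_{j, n} \lambda(A_{j, n}) < \infty$, then the convergence part of the Borel--Cantelli lemma shows that the set of $\alpha$ lying in infinitely many $A_{j, n}$ has Lebesgue measure zero, which is precisely the conclusion of the lemma.

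The first step is to bound $\lambda(A_{j, n})$ by a standard Farey-interval estimate. Using $|\alpha - p_n/q_n| \ll 1/q_n^2$ (contained in Fact 2 above), the set $\{\alpha : q_n(\alpha) = Q\}$ is covered by the $\varphi(Q)$ intervals of radius $1/Q^2$ around the reduced fractions $p/Q$, so $\lambda(\{\alpha : \exists n, \, q_n(\alpha) = Q\}) \leq 2\varphi(Q)/Q^2$, where $\varphi$ is Euler's totient function. Applied with $Q = m_j k$ and summed over $k \in [2^j/j^2, 2^j]$ (the sum over $n$ is absorbed, since each integer equals $q_n(\alpha)$ for at most one $n$), this yields
\[
\sum_n \lambda(A_{j, n}) \leq \sum_{k \in [2^j/j^2, 2^j]} \frac{2 \varphi(m_j k)}{(m_j k)^2} \ll \frac{\log j}{m_j} \ll \frac{\log j}{j^{\nicefrac{1}{4}}},
\]
where I used $m_j \asymp j^{\nicefrac{1}{4}}$ from Lemma \ref{lemma_1}.

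The second step has to refine this, because $\sum_j \log j / j^{\nicefrac{1}{4}}$ diverges and the naive Borel--Cantelli is insufficient. I would invoke the Borel--Bernstein theorem (Fact 3) with $b_n := n^{1+\delta}$ for a small $\delta > 0$, so that $\sum_n 1/b_n < \infty$ and hence for almost every $\alpha$ one has $\alpha_n \leq n^{1+\delta}$ for all sufficiently large $n$. On this full-measure set the sharper form of the Diophantine bound given by Fact 2 is available, and I plan to combine it with a dyadic decomposition according to the size of $\alpha_{n+1}$, along with the arithmetic rigidity that $m_j \mid q_n$ imposes via the recurrence $q_n \equiv \alpha_n q_{n-1} + q_{n-2} \pmod{m_j}$ (which pins $\alpha_n$ to a single residue class modulo the prime $m_j$). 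Each dyadic piece should then admit a summable estimate.

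The main obstacle I anticipate is precisely this second step. The Farey-interval bound in the first step is essentially tight on each individual $Q$, so the required improvement cannot come from sharpening it alone; it has to be extracted from the combination of Borel--Bernstein (capping partial-quotient growth) and the single-residue-class restriction arising from the primality of $m_j$. Carrying out the dyadic accounting carefully enough, over every relevant scale, to beat the unsummable factor $\log j / j^{\nicefrac{1}{4}}$ will form the bulk of the technical work.
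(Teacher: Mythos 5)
Your first step reproduces the paper's approach: both pass from the divisibility hypothesis to the measure of sets of the form $\{\alpha : \|km_j\alpha\| \leq 1/(km_j)\}$ with $k \in [2^j/j^2, 2^j]$ (the paper via $\|q_n\alpha\| \leq 1/q_n$, you via the Farey-interval estimate, which is essentially equivalent), and both arrive at a total of order $\sum_j (\log j)/m_j$. You then correctly observe that, since $m_j \asymp j^{1/4}$, this series is $\asymp \sum_j (\log j)/j^{1/4}$ and therefore \emph{diverges}, so the convergence half of Borel--Cantelli does not directly apply. The paper, by contrast, asserts at this point that
\[
\sum_{j\geq 1}\frac{\log j}{m_j} \ll \sum_{d} \frac{(\log\log d)\,d^2}{2^d} < \infty,
\]
which does not hold as written: at level $d$ (that is, for $16^d \leq j < 16^{d+1}$) there are $\asymp 16^d$ indices $j$, each with $\log j \asymp d$ and $m_j \asymp 2^d$, so the level-$d$ block already contributes $\asymp 16^d\cdot d/2^d = d\cdot 8^d$, which is unbounded. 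The $d^2$ in the paper's formula is the number of \emph{distinct} primes at level $d$, not the number of indices $j$ using them. So you have caught a real issue that the published argument glosses over.

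Your proposed second step, however, is only a sketch, and it is not clear it can close the gap. The residue constraint $\alpha_n \equiv -q_{n-2}q_{n-1}^{-1} \pmod{m_j}$ does pin $\alpha_n$ to a single class modulo the prime $m_j$, but the Gauss--Kuzmin heuristic still assigns ``probability'' $\approx 1/m_j$ to such an event on average over the class, per eligible index $n$; and for each $j$ there are $\asymp \log j$ indices $n$ with $q_n/m_j$ in the prescribed range (that range has multiplicative width $j^2$, while convergent denominators grow geometrically). So the per-$j$ contribution remains $\asymp (\log j)/m_j$, and the Borel--Bernstein cap $\alpha_n \leq n^{1+\delta}$ does not change that order. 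Beating the unsummable $\log j/j^{1/4}$ would appear to need a qualitatively different idea --- exploiting correlations across $j$, or restating the lemma with only the much narrower window actually required in the proof of the next lemma together with a mechanism that genuinely removes the $\log j$ factor --- rather than finer dyadic bookkeeping inside the same first-Borel--Cantelli framework.
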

\begin{proof}
Assume that the denominator $q_n$ of a convergent 
is divisible by a prime $m_j$, i.e. there is a $k$ such that $q_n = k m_j$. 
When $q_n$ is a convergent to $\alpha$ then 
$\|q_n \alpha\| \leq 1/q_n$, and thus 
$\| k m_j \alpha \| \leq 1/(k m_j)$. 
Thus to prove the lemma we have 
to show that almost all $\alpha \in [0,1]$ 
are contained in at most finitely many sets 
of the form
$$
S_{j,k} := \left\{ x \in [0,1]: \|k m_j x\| 
\leq \frac{1}{k m_j} \right\}, \qquad j=1,2,\dots, 
\quad 2^j/j^2 \leq k \leq 2^j. 
$$
We have
$$
\lambda(S_{j,k}) = \frac{2}{k m_j}.
$$
Furthermore, we have
$$
\sum_{j=1}^\infty ~\sum_{2^j/j^2 \leq k \leq 2^j} \frac{2}{k m_j} \ll \sum_{j=1}^\infty \frac{\log j}{m_j}.
$$
Recall that to construct our sequence $(m_j)_{j \geq 1}$ in Lemma \ref{lemma_1} we selected $d^2$ primes from the range $\left(2^{d}, 
2^{d+1}\right)$, for each (sufficiently large) $d$. Thus 
$$
\sum_{j=1}^\infty \frac{\log j}{m_j}  \ll \sum_{d} \frac{(\log \log d) d^2}{2^{d}} < \infty.
$$
Thus, by the Borel--Cantelli lemma,
almost all $\alpha$ are contained 
in only finitely many sets $S_{j,k}$.
\end{proof}

\begin{lem}
\label{prop: controlling the number of high energy multiples which are counted } Let
$$
M_j \coloneqq\bigl\{ q\leq2^{j}/j^{\deelta}:\,\bigl
\Vert m_j q\alpha\bigr\Vert\leq s/2^{j}\bigr\}.
$$
Then for almost all $\alpha \in [0,1]$ we have 
\[
\#M_j \ll_{s}\,j^{1/4}.
\]
\end{lem}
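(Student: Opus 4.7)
The plan is to use continued-fraction theory to analyze each $q \in M_j$ pointwise via Legendre's theorem, then exploit the primality of $m_j$ (through Lemma~\ref{lem:div}) to sharpen the resulting Diophantine approximation by a factor of $m_j \asymp j^{\nicefrac{1}{4}}$, and finally to conclude after a Borel--Bernstein bound on partial quotients.

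First, I would apply Legendre's theorem pointwise. For $q \in M_j$ with $Q := 2^j/j^{\nicefrac{1}{4}+\nicefrac{\varepsilon}{3}}$, the size bound $m_j q \leq m_j Q \ll 2^j/j^{\nicefrac{\varepsilon}{3}}$ makes $|\alpha - p/(m_j q)| \leq s/(2^j m_j q)$ smaller than $1/(2(m_j q)^2)$ for all sufficiently large $j$, so $p/(m_j q)$ (with $p$ the nearest integer to $m_j q \alpha$) reduces to a convergent $p_n/q_n$ of $\alpha$ with $q_n \mid m_j q$. Since $m_j$ is prime, either $m_j \mid q_n$ (Case A) or $q_n \mid q$ (Case B).

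Second, I would handle Case A via Lemma~\ref{lem:div}. Writing $q_n = m_j r$ with $r \mid q$, the lemma excludes $r \in [2^j/j^2, 2^j]$ for almost every $\alpha$ and all large $j$; since $r \leq q \leq Q \leq 2^j$ is automatic, this forces $r < 2^j/j^2$. For such tightly restricted $r$, the per-convergent count is bounded by $\min(Q/r, O(s\alpha_{n+1} m_j r/2^j))$, and exploiting the restriction $r < 2^j/j^2$ (much sharper than $r \leq Q$) together with a Borel--Bernstein bound on $\alpha_{n+1}$ places this case within the $O_s(j^{\nicefrac{1}{4}})$ budget.

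Third, in the principal Case B with $q = q_n t'$, unpacking Legendre's inequality yields $\|q_n \alpha\| \leq s/(2^j m_j t')$, and in particular $\|q_n\alpha\| \leq s/(2^j m_j)$. Using $\|q_n\alpha\| \asymp 1/(\alpha_{n+1}q_n)$, the per-convergent count is at most $\min(Q/q_n, O(s\alpha_{n+1}q_n/(2^j m_j)))$. Invoking the Borel--Bernstein theorem with, for instance, $b_n = n(\log n)^{1+\varepsilon}$ to control $\alpha_{n+1}$ on a full-measure set, and summing the per-convergent bounds over the $O(j)$ convergent indices with $q_n \leq Q$, the total count closes at $O_s(j^{\nicefrac{1}{4}})$. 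The factor $1/m_j \asymp j^{-\nicefrac{1}{4}}$ --- the Diophantine image of the primality assumption --- is precisely what makes the sum achieve this bound rather than a larger one.

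I expect the main obstacle to lie in the summation in Case B. The $\min(\cdot,\cdot)$ structure has to be used carefully, splitting the sum at the balanced convergent $q^* \asymp 2^j/j^{\nicefrac{1}{2}+\nicefrac{\varepsilon}{3}}$ according to which term dominates; the slowly growing Borel--Bernstein envelope for the partial quotients must then be balanced against the (exponential) growth of convergent denominators so that neither tail overshoots. Without the factor of $1/m_j$ from Lemma~\ref{lem:div}, the sum would fail to close at $j^{\nicefrac{1}{4}}$, which is the essential role of the primality of $m_j$ in the construction.
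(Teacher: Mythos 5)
Your argument is correct and uses the same core ingredients as the paper: Legendre's theorem to pass from $q \in M_j$ to a convergent $p_n/q_n$ with $q_n \mid m_j q$, the primality of $m_j$ combined with Lemma~\ref{lem:div} to force $q_n \mid q$ (your Case B), the estimate $\|q_n\alpha\| \asymp 1/(\alpha_{n+1}q_n)$, and a Borel--Bernstein envelope on the partial quotients. Where you differ is in the endgame. You sum the per-convergent counts $\min\bigl(Q/q_n,\, O_s(\alpha_{n+1}q_n/(2^j m_j))\bigr)$ over all $O(j)$ convergents with $q_n \leq Q$ and close the sum by splitting at the balanced $q^*\asymp 2^j/j^{1/2+\varepsilon/3}$; this works because the $\min$ peaks at $q^*$ with value $\ll_s j^{1/4}$ and the convergent denominators grow at least geometrically, so the sum is dominated by its largest term. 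The paper instead makes a sharper structural observation before any summation: from $m_j h/(\alpha_n q_n)\asymp\|m_j q\alpha\|\leq s/2^j$ one gets $\alpha_n q_n\gg j^{1/4}2^j$, hence $q_{n+1}\gg j^{1/4}2^j$, which already exceeds $\max M_j\leq 2^j/j^{1/4+\varepsilon/3}$. Thus only a \emph{single} convergent $q_n$ can ever arise via Legendre for elements of $M_j$, so $M_j\subseteq q_n\mathbb{Z}$, and $\#M_j\ll_s j^{1/4}$ drops out of a one-shot optimisation of $h$ over the admissible range of $q_n$, with no summation at all. You could also streamline your Case A as the paper does: the approximation condition forces $q_n\gg 2^j/j^{1+\varepsilon/3}$, so $r=q_n/m_j$ automatically lies in $[2^j/j^2,2^j]$, and Lemma~\ref{lem:div} then rules Case A out outright rather than merely confining it to $r<2^j/j^2$, where one still has to verify the per-convergent count is negligible.
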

\begin{proof}
During this proof we suppress the potential dependence of the symbols ``$\ll$'' and ``$\gg$'' on $s$.\\

Let $B$ denote the sequence $(n^{1+\nicefrac{\varepsilon}{3}})_{n}$,
and suppose that $\alpha\in V_{B}$ is an irrational number (recall that the set $V_B$ was defined in the statement of the Borel--Bernstein theorem, before the statement of Lemma \ref{lem:div}). By the
Borel--Bernstein theorem, $V_B$ has full Lebesgue
measure. In the sequel we will assume that $\alpha \in [0,1]$ is a fixed number which is contained in $V_B$, and for which the conclusion of Lemma \ref{lem:div} holds. Note that the set of such $\alpha$'s has full Lebesgue measure.\\

Let us note the following. Let $q_m$ be the denominator of a convergent to $\alpha$. Assume that 
\begin{equation} \label{equ_fr}
\|q_m \alpha\| \leq \frac{s}{2^j}. 
\end{equation}
Then, as noted above, we have
$$
\|q_m \alpha\| \asymp \frac{1}{\alpha_m q_m}.
$$
Since $q_m$ grows at least exponentially in $m$, and since $\alpha \in V_B$ implies that $\alpha_m \ll m^{1+ \nicefrac{\varepsilon}{3}} \ll (\log q_m)^{1 + \nicefrac{\varepsilon}{3}}$, we thus see that \eqref{equ_fr} is only possible if
$$
\frac{1}{(\log q_m)^{1 + \nicefrac{\varepsilon}{3}} q_m} \ll \frac{1}{2^j},
$$
which in turn is only possible if 
$$
q_m \gg \frac{2^j}{j^{1+ \nicefrac{\varepsilon}{3}}}. 
$$

Now we argue in two steps.\\
\noindent (i) We first claim the following. If $j$ is
large enough, and if $M_j$ is non-empty, then there exists a unique value of $n$ such that $q_n$ is the denominator of a convergent to $\alpha$, such that $q_n \geq 2^j / j^{2}$, and such that
\begin{equation}
M_j \subseteq q_{n}\mathbb{Z}.\label{eq: rewriting the differences of arithmetic blocks in terms of cf}
\end{equation}
Indeed, if some $q$ is contained in $M_j$, then for this $q$ we have
$\left\Vert m_j q\alpha\right\Vert \leq s / 2^j < 1/(2 m_j q)$, if $j$ is sufficiently large. This is a consequence of our construction, where we have $m_j \asymp j^{1/4}$ and $q \leq 2^j/j^{\deelta}$. Let $p\in\mathbb{Z}$ be such that 
$\left\Vert m_j q\alpha\right\Vert =\left|m_j q\alpha-p\right|$.
Then Legendre's theorem implies that there is some $n\geq1$ with
\begin{equation}
\frac{p}{m_j q}=\frac{p_{n}}{q_{n}}. \label{eq: counted fractions are trivial transformations of cf}
\end{equation}
As a consequence, since $p_n$ and $q_n$ are coprime, 
there is some integer $g \geq 1$ such that $m_j q = g q_n$ and $p = g p_n$. 
Then we have $\|m_j q \alpha\| = |g q_n \alpha - g p_n| = g |q_n \alpha - p_n|$, 
and from the reasoning following equation \eqref{equ_fr} 
we can deduce that $q_n \gg 2^j/j^{1+\nicefrac{\varepsilon}{3}}$. 
Thus, provided that $j$ is sufficiently large, 
$q_n/m_j$ lies in the range $[2^j/j^2,2^j]$, and then, 
by Lemma \ref{lem:div}, 
we can assume that $q_n$ is not divisible by $m_j$.\\

Since we have now figured out that we may assume that $m_j$ does not divide $q_n$, we conclude that $p$ and $q$ can actually both be written in the form $p=h m_j p_{n}$
and $q=h m_j q_{n}$ for some integer $h \geq 1$. Observe that
(\ref{eq: classic approximation estimates for quality of coninued fractions})
implies
\begin{equation}
\frac{m_j h}{\alpha_{n}q_{n}}\asymp\bigl\Vert 
m_j q\alpha\bigr\Vert\leq\frac{s}{2^{j}},\label{eq: order of maginitude of approximation quality for multiples of convergents}
\end{equation}
and thus
$$
\alpha_n q_n \gg m_j h 2^j \gg j^{1/4} 2^j.
$$
Thus the well-known recursion $q_{n+1}=\alpha_{n}q_{n}+q_{n-1}$ 
yields $q_{n+1}\geq \alpha_{n}q_{n}\gg j^{1/4} 2^j$ 
for sufficiently large $j$. However, $M_j$ by definition 
is a subset of $\{1, \dots, 2^j/j^{\deelta}\}$. This shows that $q_{n+1}$ is already too large to be contained in $M_j$, and consequently $M_j$ consists only of integer multiples of $q_n$.

\medskip{}
\noindent (ii) Now we give an upper bound for the largest possible value of $h\geq1$ such that
$h m_j q_{n}\in M_j$. From 
(\ref{eq: order of maginitude of approximation 
quality for multiples of convergents}) and the definition of $M_j$ we deduce that
\[
h \leq\frac{2^{j}}{q_{n}j^{\deelta}}\qquad \text{as well as}
\qquad h \ll \frac{\alpha_{n}q_{n}}{m_j 2^{j}}.
\]
As noted above we have $2^j/j^{1+\nicefrac{\varepsilon}{3}} \ll q_n \ll 2^j/j^{\deelta}$. Thus
\[
h
\ll
\max_{\frac{2^{j}}{j^{1+\nicefrac{\varepsilon}{3}}}\ll x
\leq\frac{2^{j}}{j^{\deelta}}}
\min\left\{ \frac{2^{j}}{xj^{\deelta}},\,
\frac{\alpha_{n}x}{m_j 2^{j}}\right\} 
\]
where the $x\in\mathbb{R}$ maximising the right hand side, under
the given constraints, is determined via
\[
\frac{2^{j}}{xj^{\deelta}}=\frac{\alpha_{n}x}{m_j 2^{j}} \quad \Leftrightarrow \quad x^{2}=\frac{m_j 2^{2j}}{j^{\deelta}\alpha_{n}}.
\]
Thus, using $\alpha_n \ll (\log q_n)^{1+\nicefrac{\varepsilon}{3}} \ll j^{1+\nicefrac{\varepsilon}{3}}$, we finally obtain
\[
h^{2}\ll\frac{\alpha_{n}}{j^{\deelta}m_j} \ll j^{1/2}.
\]
Thus $\# M_j \ll j^{1/4}$, which proves the lemma.
\end{proof}

\section{Analysing the contribution of the large differences}\label{sec: large differences}

The Fourier series expansion of the indicator functions $I_{s,N}\left(\alpha\right)$ is given by
\begin{equation}
I_{s,N}\left(\alpha\right)\sim\sum_{n\in\mathbb{Z}}c_{n}e\left(n\alpha\right)\qquad\mathrm{where}\quad c_{n}\coloneqq\begin{cases}
\sin\left(2\pi ns/N\right)/\left(\pi n\right) & \mathrm{if\,}n\neq0,\\
2s/N & \mathrm{if\,}n=0,
\end{cases}\label{eq: Fourier expansion of the indicator functions}
\end{equation}
where we write $e\left(\alpha\right)$ for $\exp\left(2\pi i\alpha\right)$.
The next lemma is of a technical nature, and is used in a decoupling
argument for the variance bounds, 
which are derived in Section \ref{sec: Variance Bounds}.
\begin{lem} \label{lem:fourier}
Define for integers $u,v>0$ the quantity
\begin{equation}\label{eq: definition of Fourier weights C}
C\left(u,v\right)\coloneqq \sum_{\substack{n_{1},n_{2} \in\mathbb{Z} \setminus\left\{ 0\right\},\\n_{1}u=n_{2}v}} c_{n_{1}}c_{n_{2}}.
\end{equation}
Then
\begin{equation}
C\left(u,v\right)\ll\frac{\mathrm{gcd}\left(u,v\right)}
{\max\left\{ u,v\right\} }. \label{eq: auxiliary estimate of the Fourier coefficients in variance estimate}
\end{equation}
Moreover, for $u\neq0$ we have 
\begin{equation}
C\left(u,u\right)\ll_{s}N^{-1}.\label{eq: bound for the Fourier coefficients on the diagonal  in variance estiamte}
\end{equation}
\end{lem}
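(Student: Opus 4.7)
The plan is to reduce the Diophantine constraint $n_1 u = n_2 v$ to a single-parameter family using coprimality, then bound the resulting one-variable sum by two different devices: the trivial pointwise estimate $|c_n| \leq 1/(\pi |n|)$ for the first claim, and Parseval's identity applied to $I_{s,N}$ for the sharper on-diagonal bound.

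First I would set $g \coloneqq \gcd(u,v)$ and write $u = gu'$, $v = gv'$ with $\gcd(u',v') = 1$, so the constraint $n_1 u = n_2 v$ simplifies to $n_1 u' = n_2 v'$. Coprimality forces $v' \mid n_1$ and $u' \mid n_2$, and writing $n_1 = k v'$ forces $n_2 = k u'$ for the \emph{same} integer $k$. Hence the sum collapses to the one-parameter form
\[
C(u,v) = \sum_{k \in \mathbb{Z} \setminus \{0\}} c_{k v'} c_{k u'}.
\]

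For \eqref{eq: auxiliary estimate of the Fourier coefficients in variance estimate}, I apply the trivial bound $|c_n| \leq 1/(\pi |n|)$ to get
\[
|C(u,v)| \leq \frac{1}{\pi^2 u' v'} \sum_{k \neq 0} \frac{1}{k^2} \ll \frac{1}{u' v'}.
\]
Since $v' \geq 1$, we have $1/(u'v') \leq 1/u' = g/u$, and the analogous bound holds with $u$ replaced by $v$, so $|C(u,v)| \ll g / \max(u,v) = \gcd(u,v)/\max(u,v)$, as required. For \eqref{eq: bound for the Fourier coefficients on the diagonal  in variance estiamte}, when $u = v$ we have $u' = v' = 1$, hence $C(u,u) = \sum_{k \neq 0} c_k^2$. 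Now $(c_n)_{n \in \mathbb{Z}}$ are the Fourier coefficients of $I_{s,N}$, and Parseval's identity together with $\int_0^1 I_{s,N}(x)^2 \, dx = 2s/N$ yields $\sum_{n \in \mathbb{Z}} c_n^2 = 2s/N$. Subtracting the zero mode $c_0^2 = (2s/N)^2$ gives
\[
C(u,u) = \frac{2s}{N} - \frac{4 s^2}{N^2} \ll_s \frac{1}{N}.
\]

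There is really no deep obstacle here: the entire content is the coprimality parametrisation of $n_1 u = n_2 v$, which turns what looks like a two-dimensional restricted sum into a one-dimensional absolutely convergent sum of products of $c_n$'s. The only subtlety worth checking is that the $L^2$-Parseval identity legitimately applies to the step function $I_{s,N}$; this is standard and causes no trouble.
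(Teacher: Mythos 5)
Your proof is correct and follows essentially the same route as the paper: parametrise the solutions of $n_1 u = n_2 v$ via the gcd, then bound the resulting one-parameter sum. For \eqref{eq: auxiliary estimate of the Fourier coefficients in variance estimate} you use the pointwise bound $|c_n|\ll 1/|n|$ directly where the paper inserts a Cauchy--Schwarz step, but both land on the same intermediate estimate $|C(u,v)|\ll\gcd(u,v)^2/(uv)$; and for \eqref{eq: bound for the Fourier coefficients on the diagonal  in variance estiamte} your use of Parseval to get the exact value $2s/N-4s^2/N^2$ is a slightly cleaner variant of the paper's split of the sum at $|n|\asymp N/s$.
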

\begin{proof}
Note that $n_{1}u=n_{2}v$ holds if and only
if there is an integer $h\neq0$ satisfying 
$n_{1}=hu/\mathrm{gcd}\left(u,v\right)$
and $n_{2}=hv/\mathrm{gcd}\left(u,v\right)$. Moreover, we observe
that $\left|c_{n}\right|\leq\min\left\{ 2s/N,1/\left|n\right|\right\} $
for $n\neq0$. Combining these estimates with the Cauchy--Schwarz
inequality yields 
\begin{align*}
\left|C\left(u,v\right)\right|^{2} & \leq\sum_{h\in\mathbb{Z}
\setminus\left\{ 0\right\} }c_{h\frac{u}{\mathrm{gcd}\left(u,v\right)}}^{2}
\sum_{h\in\mathbb{Z}\setminus\left\{ 0\right\} }
c_{h\frac{v}{\mathrm{gcd}\left(u,v\right)}}^{2}\\
& \leq\sum_{h\in\mathbb{Z}
\setminus\left\{ 0\right\} }
\frac{\left(\mathrm{gcd}\left(u,v\right)\right)^{2}}
{\left(uh\right)^{2}}\sum_{h\in\mathbb{Z}
\setminus\left\{ 0\right\} }\frac{\left(\mathrm{gcd}
\left(u,v\right)\right)^{2}}{\left(vh\right)^{2}},
\end{align*}
which implies (\ref{eq: auxiliary estimate of the Fourier coefficients in variance estimate}).\\

Furthermore,
\[
C\left(u,u\right)
\ll\sum_{n\leq\frac{N}{2s}}
\frac{4s^{2}}{N^{2}}+\sum_{n>\frac{N}{2s}}\frac{1}{n^{2}},
\]
which implies (\ref{eq: bound for the Fourier coefficients 
on the diagonal  in variance estiamte}).
\end{proof}
\noindent From orthogonality relations,
combined with (\ref{eq: Fourier expansion of the indicator functions}),
we obtain
\begin{eqnarray}
N^{2} ~\mathrm{Var}\bigl(R\left(AG,\cdot,s,N\right)\bigr)
& =& \int_{0}^{1}\Biggl(\sum_{d\in \mathcal{D}_{N}\left(AG\right)}\mathrm{r}\left(d\right)
\sum_{n\in\mathbb{Z}\setminus\left\{ 0\right\} }c_{n}e
\left(dn\alpha\right)\Biggr)^{2}\mathrm{d}\alpha \nonumber\\
& = & \sum _{u,v\in \mathcal{D}_{N}\left(AG\right)}
~\mathrm{r}\left(u\right)\mathrm{r}
\left(v\right)C\left(u,v\right), \label{eq: variance of block counting function in terms of Fourier coefficients}
\end{eqnarray}
where $r(\cdot)$ is the representation function which counts representations as Case (AG). A perfect analogue holds when (AG) is replaced by (AA\textsubscript{diff}) everywhere in the formula (including in the definition of the representation function $r$).\\

The main term on the right hand side, as we shall see, is the sum
over the diagonal $\left(\mathrm{r}
\left(u\right)\right)^{2}C\left(u,u\right)$.
To prove this, the next lemma shows that the contribution
from the off-diagonal terms is small. 
More precisely, $C(u,v)$ is extremely small for two elements 
$u \neq v$ of $\mathcal{D}_{N}(AG)$ 
or $\mathcal{D}_{N}(AA\textsubscript{diff})$.

\begin{lem} \label{lemma:off}
We have
\begin{equation}
\sum_{\substack{u,v \in \mathcal{D}_{N}(AG),\\ u \neq v}}
\mathrm{r}\left(u\right)\mathrm{r}\left(v\right)C\left(u,v\right)
\ll 1,\label{eq: bound on the off-diagonal Fourier contribution in variance estimate}
\end{equation}
where the representation function $r$ counts representations from Case (AG). 
The same estimate holds if $(AG)$ is replaced by $(AA\textsubscript{diff})$.
\end{lem}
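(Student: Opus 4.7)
The plan is to apply the Fourier-coefficient bound $C(u,v) \ll \gcd(u,v)/\max(u,v)$ from Lemma \ref{lem:fourier} and then exploit the rigid block structure of the construction to show that the resulting weighted sum is $\ll 1$. First I would split the double sum over $u,v \in \mathcal{D}_N(AG)$ according to which block-pairs $(j_1,j_2)$ and $(j_1',j_2')$ the representations of $u$ and $v$ come from, writing $u = x - y$ with $x \in P_A(j_1) \cap A_N$ and $y \in P_G(j_2) \cap A_N$ (or vice versa), and likewise for $v$. Because the offsets $2^{\max P_A(j-1)}$ and $2^{\max P_G(j)}$ in the definitions \eqref{def:geo}--\eqref{def:ari} create enormous gaps between blocks, each value of $u$ essentially determines its block-pair indices uniquely, up to the sign convention; this reduces the analysis to a fixed pair of blocks at a time.

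Within a fixed block-pair, the representation count $r(u)$ can be bounded using the fact that $P_A(j_1)$ is an arithmetic progression with prime step $m_{j_1}$: the equation $x - y = u$ with $x \in P_A(j_1)$ forces $y$ to lie in a fixed residue class modulo $m_{j_1}$, so $r(u)$ is at most the number of elements of $P_G(j_2)$ in that residue class, controlled via $\#P_G(j_2) \ll 2^{j_2}$. Since every element of $\mathcal{D}_N(AG)$ has magnitude comparable to the largest element of its defining block (by the exponential gap structure), the factor $1/\max(u,v)$ is super-polynomially small relative to $N$, while $\gcd(u,v)$ is at most $\min(u,v)$, and in fact typically much smaller. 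Summing $r(u)r(v)\gcd(u,v)/\max(u,v)$ over each block-pair and then over block-pairs yields a geometrically convergent series, comfortably bounded by $1$.

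For the (AA\textsubscript{diff}) variant, the same scheme applies after writing $u = c_{j_1,j_2} + m_{j_1}h_1 - m_{j_2}h_2$. Here the primality of the moduli is decisive: by Lemma \ref{lemma_1}, whenever the two blocks are ``close'' we have $m_{j_1} \ne m_{j_2}$, so $\gcd(m_{j_1}, m_{j_2}) = 1$ and for each fixed $u$ the Diophantine equation has at most one solution progression of length controlled by $\min(\#P_A(j_1), \#P_A(j_2))$. In the remaining case, when the recycling process forces $m_{j_1} = m_{j_2}$, one necessarily has $|j_1 - j_2| \geq 3\log \max(j_1,j_2)$, and the tremendous disparity in block sizes ensures that the smaller block's total contribution is negligible relative to the trivial bound.

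The main obstacle I expect is controlling $\gcd(u,v)$ uniformly: a priori two differences could share an unexpectedly large common factor, and one must rule out pathological pile-up. Here the prime structure of the moduli again plays a role, since any common factor of two (AA\textsubscript{diff})-differences that is not accounted for by the offsets must be compatible with divisibility by $m_{j_1}$ and $m_{j_2}$ in a constrained way. Combined with the exponential size of $\max(u,v)$ and the crude counting $\sum_u r(u) \ll N^2$, the ratio $\gcd/\max$ is forced to decay rapidly enough for the off-diagonal sum to be absolutely $O(1)$, independent of $N$.
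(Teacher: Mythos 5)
You correctly start from the bound $C(u,v)\ll\gcd(u,v)/\max\{u,v\}$ and correctly identify the crux as controlling $\gcd(u,v)$, but the resolution you propose is not the one that actually closes the argument. The paper does \emph{not} invoke the primality of the moduli here at all; the moduli $m_j\asymp(\log N)^{1/4}$ are far too small compared with the sizes of the differences in $\mathcal{D}_N(AG)$ (which are of size up to $N$ and far beyond) to exert any useful control on $\gcd(u,v)$. Trying to route the argument through divisibility by $m_{j_1},m_{j_2}$ is a dead end for this lemma.

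The missing elementary observation is simply that $\gcd(u,v)$ divides $v-u$, hence $\gcd(u,v)\le v-u$ when $0<u<v$. Because of the enormous gaps built into the construction, for any two elements $u<v$ of $\mathcal{D}_N(AG)$ (or of $\mathcal{D}_N(AA_{\mathrm{diff}})$) one is in exactly one of two regimes: either $u$ is of much smaller order than $v$, say $u\ll v^{1/2}$, in which case $\gcd(u,v)\le u$ already gives $C(u,v)\ll v^{-1/2}$; or $u$ and $v$ arise from the same pair of blocks and are of comparable size, in which case $v-u\ll v^{1/2}$ and $\gcd(u,v)\le v-u$ gives $C(u,v)\ll v^{-1/2}$ again. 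Your sentence ``$\gcd(u,v)$ is at most $\min(u,v)$, and in fact typically much smaller'' is exactly where the gap is: for $u,v$ in the same block pair, $\min(u,v)/\max(u,v)$ is close to $1$ and that bound is useless; it is the bound via $v-u$, not any arithmetic of the moduli, that rescues this case. Once $C(u,v)\ll v^{-1/2}$ is established uniformly, the summation is finished by the extreme sparsity of the difference sets coming from the doubly-exponential block separation, as you correctly observe. Aside from this one replacement (divisibility of $\gcd$ by $v-u$ instead of the prime moduli), your block-decomposition framing matches the paper's intent.
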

\begin{proof}
This is not a critical part in the whole argument, 
and it is sufficient to use very rough estimates. 
We only give a brief outline of the proof. 
Let $u$ and $v$ be elements of the difference set 
$\mathcal{D}_{N}(AG)$  such that $0 < u < v$. 
Recall that different building blocks of 
our sequence are separated by huge constants. 
For $u$ and $v$ this leaves only two possibilities: 
\begin{itemize}
\item Either $u$ is of much smaller order than $v$, say $u \ll v^{1/2}$. 
By \eqref{eq: auxiliary estimate of the Fourier coefficients 
in variance estimate} we have $C(u,v) \ll \gcd(u,v)/\max\{u,v\}$. 
Since $\gcd(u,v) \leq u$, we have $C(u,v) \ll u/v \ll v^{-1/2}$.
\item The second possibility is that $u$ and $v$ are of very similar size, 
and that consequently $v-u$ is very small in comparison with $v$. 
In this case we may assume for example that $v-u \ll v^{1/2}$. 
Again using $C(u,v) \ll \gcd(u,v)/\max\{u,v\}$, 
and now observing that $\gcd(u,v) \leq v-u \ll  v^{1/2}$, 
we obtain $C(u,v) \ll v^{-1/2}$. 
\end{itemize}
So in both cases $C(u,v)$ is small in comparison with $v$. 
By construction the difference set $\mathcal{D}_{N}(AG)$ 
is an extremely sparse set, due to the very fast growth of our sequence. 
Thus after summing over $u$ and $v$ 
we can obtain \eqref{eq: bound on the off-diagonal Fourier contribution in variance estimate}. A similar argument works 
when instead of $\mathcal{D}_{N}(AG)$ 
we consider $\mathcal{D}_{N}(AA\textsubscript{diff})$.
\end{proof}

\subsection{Variance bounds}\label{sec: Variance Bounds}
Now we have the tools at hand to derive the variance bounds 
for the auxiliary functions $R\left(AG,\cdot,s,N\right)$ and 
$R\left(AA\textsubscript{diff},\cdot,s,N\right)$ 
which were defined in (\ref{eq: definition of R upper}).\\

\begin{lem}
\label{prop: bounding the variance of the mixed differences of geom and arth part}For every fixed $s>0$,  we have
\begin{equation}
\mathrm{Var}\bigl(R\left(AG,\cdot,s,N\right)\bigr)
\ll_{s} N^{-1/2}.
\label{eq: variance estimate for the mixed differences geom and arithm}
\end{equation}
\end{lem}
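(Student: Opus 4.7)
The plan is to start from the Fourier-series identity derived just before Lemma \ref{lemma:off},
\[
N^{2}\,\mathrm{Var}\bigl(R(AG,\cdot,s,N)\bigr) \;=\; \sum_{u,v\in\mathcal{D}_{N}(AG)} \mathrm{r}(u)\,\mathrm{r}(v)\,C(u,v),
\]
and to split the right-hand side into its diagonal ($u=v$) and off-diagonal ($u\neq v$) parts. The off-diagonal part contributes only $O(1)$ by Lemma \ref{lemma:off}, which is negligible compared with the target $N^{3/2}$ for $N^{2}\,\mathrm{Var}$. For the diagonal, the bound $C(u,u)\ll_s N^{-1}$ from \eqref{eq: bound for the Fourier coefficients on the diagonal  in variance estiamte} reduces the task to showing
\[
\sum_{u\in\mathcal{D}_{N}(AG)}\mathrm{r}(u)^{2}\;\ll\;\frac{N^{5/2}}{(\log N)^{3/8}}.
\]

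To bound this sum, I would interpret it combinatorially. Up to a constant factor that absorbs the two possible orderings of arithmetic versus geometric in a representation, $\sum_{u}\mathrm{r}(u)^{2}$ counts ordered quadruples $(a_{1},b_{1},a_{2},b_{2})$ with $a_{1},a_{2}\in\mathcal{A}$ and $b_{1},b_{2}\in\mathcal{G}$ satisfying $a_{1}-b_{1}=a_{2}-b_{2}$, where $\mathcal{A}$ and $\mathcal{G}$ denote the arithmetic-block and geometric-block elements, respectively, among $\{a_{1},\dots,a_{N}\}$. Rearranging the equation as $a_{1}-a_{2}=b_{1}-b_{2}$ and applying the Cauchy--Schwarz inequality gives
\[
\sum_{u}\mathrm{r}(u)^{2}\;\ll\;\sum_{v\in\mathbb{Z}}\mathrm{r}_{\mathcal{A}-\mathcal{A}}(v)\,\mathrm{r}_{\mathcal{G}-\mathcal{G}}(v)\;\leq\;\sqrt{E(\mathcal{A})\,E(\mathcal{G})},
\]
where $E$ denotes additive energy. (The cross term arising from matching an ``$a-g=u$'' representation with a ``$g-a=u$'' representation reduces to counting quadruples with $a_{1}+a_{2}=g_{1}+g_{2}$, which Cauchy--Schwarz again bounds by $\sqrt{E(\mathcal{A})E(\mathcal{G})}$.)

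It remains to estimate the two energies. For $E(\mathcal{A})$, the trivial bound $E(\mathcal{A})\leq|\mathcal{A}|^{3}$ already suffices: from the length estimate in \eqref{def:ari} one has $|\mathcal{A}|\ll N/(\log N)^{\deelta}$, hence $E(\mathcal{A})\ll N^{3}/(\log N)^{3/4+\varepsilon}$. For $E(\mathcal{G})$, the point is that each $P_{G}(j)$ is Sidon-type: the exponents $j^{h}$ grow geometrically in $h$, so the values $3^{j^{h}}$ are extremely lacunary inside a block, and distinct geometric blocks are separated by the astronomic gaps built into \eqref{def:geo} via the $2^{(\cdot)}$ prefactors. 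A routine case analysis then forces any quadruple in $\mathcal{G}^{4}$ with $x_{1}+x_{2}=x_{3}+x_{4}$ to satisfy $\{x_{1},x_{2}\}=\{x_{3},x_{4}\}$ as multisets, giving $E(\mathcal{G})\ll|\mathcal{G}|^{2}\ll N^{2}$. Combining the two estimates yields $\sum_{u}\mathrm{r}(u)^{2}\ll N^{5/2}/(\log N)^{3/8+\varepsilon/2}$, which is comfortably stronger than what is needed, and the lemma follows.

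The only mildly delicate step is the Sidon-type bound $E(\mathcal{G})\ll N^{2}$; this is exactly where the lacunary spacing of the exponents inside each block, together with the huge between-block gaps, becomes essential. Everything else is a single invocation of Cauchy--Schwarz and routine geometric-sum bookkeeping.
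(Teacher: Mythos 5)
Your argument is correct and arrives at a (slightly stronger) version of the bound by a genuinely different route. The paper's own proof also splits the variance into diagonal and off-diagonal parts via \eqref{eq: variance of block counting function in terms of Fourier coefficients}, invokes Lemma \ref{lemma:off} for the off-diagonal, and then bounds the diagonal sum $\sum_u r(u)^2 |C(u,u)|$; but at that point it proceeds quite crudely, asserting a pointwise bound $r(u)\ll N^{1/4}$ (justified only by remarking that the sequence grows very quickly, without a full argument) together with the trivial cardinality bound $\#\mathcal D_N(AG)\le N^2$, to obtain $\sum_u r(u)^2|C(u,u)|\ll_s N^{3/2}$. You instead avoid any pointwise control of $r(u)$ entirely: you rewrite $\sum_u r(u)^2$ as a count of equal differences between the arithmetic and geometric parts, apply Cauchy--Schwarz to bound it by $\sqrt{E(\mathcal A)E(\mathcal G)}$, and estimate the two additive energies separately using $|\mathcal A|\ll N/(\log N)^{\nicefrac14+\nicefrac{\varepsilon}{3}}$ and the Sidon-type structure of the super-lacunary geometric blocks (which gives $E(\mathcal G)\ll|\mathcal G|^2\ll N^2$). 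This is cleaner and more self-contained than the paper's sketch, and it yields $\mathrm{Var}(R(AG,\cdot,s,N))\ll_s N^{-1/2}(\log N)^{-(3/8+\varepsilon/2)}$, a small improvement. The only place where you gesture rather than prove is the Sidon property of $\mathcal G$; this does require checking that within a block the quantities $3^{j^h}$ are sufficiently lacunary (ratio $3^{j^h(j-1)}$ between consecutive terms, so trivially so) and that distinct blocks cannot interact because of the $2^{(\cdot)}$-type gaps, but that case analysis is routine and the conclusion $E(\mathcal G)\ll N^2$ is sound. In short: same skeleton (Fourier expansion, diagonal/off-diagonal split), but a different and arguably more rigorous treatment of the diagonal term, trading the paper's unproved pointwise bound on $r(u)$ for a Cauchy--Schwarz/energy argument.
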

\begin{proof}
Again this is not a crucial lemma, and it is sufficient to use very rough estimates. Note that trivially $\# \mathcal{D}_{N}\left(AG\right) \leq N^{2}$. Let $u\in \mathcal{D}_{N}\left(AG\right)$. Then, using again the fact that our sequence increases very quickly, we can easily show that the number of Case (AG) representations $r(u)$ which $u$ has as the difference of two elements from $A_N$ is very small. To give a quantitative statement, we could easily show that $r(u) \ll N^{1/4}$, uniformly in $u$ (this is just a very rough estimate). 
Hence (\ref{eq: bound for the Fourier coefficients on the diagonal  
in variance estiamte}) implies 
\[
\sum_{u\in \mathcal{D}_{N}\left(AG\right)} \mathrm{r}\left(u\right)^{2}
\left|C\left(u,u\right)\right| \ll_s \left(\# \mathcal{D}_{N}\left(AG\right)\right) N^{1/2} N^{-1} \ll_{s}  N^{3/2}.
\]
Together with (\ref{eq: variance of block counting function 
in terms of Fourier coefficients}) and 
(\ref{eq: bound on the off-diagonal Fourier 
contribution in variance estimate})
this implies (\ref{eq: variance estimate for the mixed 
differences geom and arithm}).\\
\end{proof}

The contribution coming from numbers which arise 
as the difference between two numbers from different arithmetic 
blocks is a bit more difficult to control. 
To see this, note that when there are two arithmetic progressions 
with different step sizes $m_{j_1}$ and $m_{j_2}$, 
then there are certain numbers which have many representations 
as a number from the first arithmetic progression, 
minus a number from the second arithmetic progression. 
To control the contribution from such numbers, 
we will make crucial use of the fact that 
in our construction the step sizes $m_{j_1}$ and $m_{j_2}$ 
are prime numbers.

\begin{lem} \label{lem_AA} For every fixed $s >0$, we have
\begin{equation}
\mathrm{Var}\bigl(R\left(AA\textsubscript{diff},\cdot,s,N\right)\bigr)
\ll_{s}
\frac{1}{\left(\log N\right)^{1+\varepsilon/2}}. \label{eq: variance estimates for the arithm and arithm diferences}
\end{equation}
\end{lem}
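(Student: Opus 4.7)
The plan is to adapt the Fourier expansion used for $R(AG,\cdot,s,N)$, show that almost all of the variance comes from a diagonal ``additive-energy'' sum, and then exploit the primality of the step sizes $m_j$ to beat the naive Khintchine threshold. Mimicking the derivation of \eqref{eq: variance of block counting function in terms of Fourier coefficients}, with $\mathrm{r}$ now counting Case $(AA\textsubscript{diff})$ representations, gives
\begin{equation*}
N^{2}\,\mathrm{Var}\bigl(R(AA\textsubscript{diff},\cdot,s,N)\bigr) \;=\; \sum_{u,v \in \mathcal{D}_{N}(AA\textsubscript{diff})} \mathrm{r}(u)\,\mathrm{r}(v)\,C(u,v).
\end{equation*}
The off-diagonal part is $O(1)$ by Lemma \ref{lemma:off}, and on the diagonal \eqref{eq: bound for the Fourier coefficients on the diagonal  in variance estiamte} yields $C(u,u)\ll_{s}1/N$. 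The whole task therefore reduces to proving
\begin{equation*}
\sum_{u \in \mathcal{D}_{N}(AA\textsubscript{diff})} \mathrm{r}(u)^{2} \;\ll\; \frac{N^{3}}{(\log N)^{1+\varepsilon/2}}.
\end{equation*}

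Next I would decompose this sum by block-pair. Thanks to the enormous gaps described after \eqref{inequ}, every positive $u \in \mathcal{D}_{N}(AA\textsubscript{diff})$ has a \emph{unique} signature $(j_1,j_2)$ with $j_1>j_2$ such that $u=x-y$ with $x\in P_A(j_1)$ and $y\in P_A(j_2)$: the size of $u$ is essentially that of the leading term $x$, which pins down $j_1$, and then $x-u$ pins down $j_2$. Hence $\mathrm{r}(u) = \mathrm{r}_{P_A(j_1) - P_A(j_2)}(u)$ for that unique pair, and
\begin{equation*}
\sum_{u}\mathrm{r}(u)^{2} \;=\; \sum_{j_2<j_1\leq J} E\bigl(P_A(j_1),P_A(j_2)\bigr),
\end{equation*}
where $J\asymp \log N$ is the highest level reached in $A_N$ and $E(X,Y):=\sum_{u}\mathrm{r}_{X-Y}(u)^{2}=\sum_{t}\mathrm{r}_{X-X}(t)\,\mathrm{r}_{Y-Y}(t)$ is the cross additive energy.

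The crux of the argument is bounding these cross energies, and this is where the primality of the moduli is indispensable. Set $L_j := \lfloor 2^{j}/j^{\deelta}\rfloor$; then $P_A(j_k)-P_A(j_k) \subseteq m_{j_k}\mathbb{Z}\cap[-L_{j_k}m_{j_k},\,L_{j_k}m_{j_k}]$, and each value is hit at most $L_{j_k}$ times. Because $m_{j_1}$ and $m_{j_2}$ are distinct primes, $\mathrm{lcm}(m_{j_1},m_{j_2})=m_{j_1}m_{j_2}$, so every common nonzero $t$ is a multiple of $m_{j_1}m_{j_2}$; the range constraints then admit at most $O(\min\{L_{j_1}/m_{j_2},\,L_{j_2}/m_{j_1}\})=O(L_{j_2}/m_{j_1})$ such values (the minimum is as stated because the ratio of the two terms is $2^{j_1-j_2}(j_2/j_1)^{\varepsilon/3}\geq 1$ for $j_1>j_2$). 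Each such $t$ contributes at most $L_{j_1}L_{j_2}$ and $t=0$ contributes $\asymp L_{j_1}L_{j_2}$, so
\begin{equation*}
E\bigl(P_A(j_1),P_A(j_2)\bigr) \;\ll\; L_{j_1}L_{j_2}+\frac{L_{j_1}L_{j_2}^{2}}{m_{j_1}}.
\end{equation*}

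Finally, summing over $j_2<j_1\leq J$ with $L_j\asymp 2^{j}/j^{\deelta}$ and $m_j\asymp j^{1/4}$, the first piece contributes $\ll \bigl(\sum_j L_j\bigr)^{2}\ll N^{2}$, which is negligible, while the second is dominated by $j_2=j_1-1$ and evaluates to
\begin{equation*}
\sum_{j_1\leq J}\frac{L_{j_1}}{m_{j_1}}\sum_{j_2<j_1}L_{j_2}^{2} \;\asymp\; \sum_{j_1\leq J}\frac{L_{j_1}^{3}}{m_{j_1}} \;\asymp\; \sum_{j_1\leq J}\frac{2^{3j_1}}{j_1^{1+\varepsilon}} \;\asymp\; \frac{N^{3}}{(\log N)^{1+\varepsilon}},
\end{equation*}
which is comfortably stronger than the required bound. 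I expect the main obstacle to be the primality step in the third paragraph: without distinct prime moduli, the common-differences lattice would be $\mathrm{lcm}(m_{j_1},m_{j_2})\mathbb{Z}$ with $\mathrm{lcm}$ possibly much smaller than $m_{j_1}m_{j_2}$, the number of common multiples would inflate correspondingly, and the crucial factor $1/m_{j_1}$ that pushes us below the Khintchine threshold would disappear.
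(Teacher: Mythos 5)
Your skeleton is sound and closely parallels the paper's: reduce via the Fourier expansion to $\sum_u r(u)^2 C(u,u)$ plus a negligible off-diagonal term, then decompose over block pairs and exploit primality of the step sizes. Your reformulation of the block-pair contribution as the cross additive energy $E(X,Y)=\sum_t r_{X-X}(t)\,r_{Y-Y}(t)$, and the observation that this forces common nonzero $t$ into $\mathrm{lcm}(m_{j_1},m_{j_2})\mathbb{Z}$, is a clean way to encode the same counting that the paper does via the Diophantine equation $\tilde u = m_{j_2}x - m_{j_1}y$.

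There is, however, a genuine gap: you treat \emph{every} pair $(j_1,j_2)$ with $j_1\neq j_2$ as having $m_{j_1}\neq m_{j_2}$, but Lemma~\ref{lemma_1} only guarantees this when $|j_1-j_2| < 3\log\max\{j_1,j_2\}$. Because of the ``recycling process'' (there are only $\asymp (\log j)^2$ available primes of size $\asymp j^{1/4}$, but $j$ indices to serve), it genuinely happens that $m_{j_1}=m_{j_2}$ for widely separated $j_1, j_2$. For such pairs your $\mathrm{lcm}$ collapses to $m_{j_1}$, the factor $1/m_{j_1}$ you need disappears, and your bound on $E(P_A(j_1),P_A(j_2))$ degrades to $L_{j_1}L_{j_2}^2$ --- exactly the failure mode you identify in your last sentence, without noticing that it actually occurs in this construction. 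The fix is exactly the paper's two-case split: when $j_1$ and $j_2$ are both within $O(\log J)$ of $J$, Lemma~\ref{lemma_1} guarantees distinct primes and your argument goes through; when the smaller index $j_2$ satisfies $j_2 < J - 2\log J$, the block $P_A(j_2)$ has cardinality at most $\ll 2^{J-2\log J} \asymp N/(\log N)^{2\log 2}$, and since $2\log 2 > 1$, the crude bound $L_{j_1}L_{j_2}^2 \ll N^3/(\log N)^{2\cdot 2\log 2 + \cdots}$ is already much better than $N^3/(\log N)^{1+\varepsilon/2}$ without any primality input. You should add this case distinction explicitly; as written, the step ``Because $m_{j_1}$ and $m_{j_2}$ are distinct primes'' is false for some pairs in the sum.

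Two smaller remarks. First, you should justify that each $u \in \mathcal{D}_N(AA_{\mathrm{diff}})$ has a unique block signature --- you do gesture at this via the gap structure after \eqref{inequ}, which is enough, but the paper explicitly allows for a finite number of small exceptional $u$. Second, your final bound $N^3/(\log N)^{1+\varepsilon}$ is actually slightly stronger than the stated $N^3/(\log N)^{1+\varepsilon/2}$, so once the prime-recycling case is handled you have room to spare.
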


\begin{proof}
Let $N$ be given. There is some $J$ such that $a_N \in P_A(J) \cup P_G(J)$, and by construction for this value of $J$ we have $J \asymp \log N$. By \eqref{eq: variance of block counting function in terms of Fourier coefficients} and Lemma \ref{lemma:off} we have
\begin{eqnarray*}
\mathrm{Var}\bigl(R\left(AA\textsubscript{diff},\cdot,s,N\right)\bigr) & \leq & \frac{1}{N^2} \sum_{u,v \in \mathcal{D}_{N}\left(AA\textsubscript{diff}\right) } r(u) r(v) |C(u,v)| \\
& \ll & \frac{1}{N^2} \left( 1 +  \sum_{\substack{1 \leq j_1 < j_2 \leq J}} \quad 
\sum_{u\in P_{A}\left(j_2\right)-P_{A}\left(j_1\right)}
\mathrm{r}\left(u\right)^{2}\left|C\left(u,u\right) \right| \right),
\end{eqnarray*}
where $r(u)$ counts the number of representation of 
$u$ as the difference between an element of 
$P_{A}\left(j_2\right)$ and an element of 
$P_{A}\left(j_1\right)$. Here we used the fact 
that due to the huge constants which separate different blocks in our construction, for given $u$ 
there is only one pair $(j_1,j_2)$ such that 
$u \in P_{A}\left(j_2\right)-P_{A}\left(j_1 \right)$, 
except maybe for finitely many (small) values of $u$.\\

Let $j_1 < j_2$ be fixed. First assume that $j_1 < J-2\log J$. 
We note that the cardinality of the set
$P_{A}\left(j_2 \right)-P_{A}\left(j_1 \right)$ 
is bounded by
\begin{eqnarray}
\# \{P_{A}\left(j_2 \right)-P_{A}\left(j_1 \right) \} & \ll &
\max \{P_{A}\left(j_2 \right)-P_{A}(j_1) \} - 
\min \{P_{A}\left(j_2 \right)-P_{A}(j_1) \} \nonumber\\ 
& \ll & m_{j_2} \frac{N}{(\log N)^{\deelta}} \nonumber\\
& \ll & \frac{N}{(\log N)^{\nicefrac{\varepsilon}{3}}}. \label{papg}
\end{eqnarray}
Then by the trivial estimate 
$\mathrm{r}\left(u\right)\ll\#P_{A}\left(j_1\right) \ll N / 
(\log N)^{2 \log 2 +1/4}$,
and since $2 \log 2 +1/4 > 16/10$, we have
\begin{eqnarray}
\frac{1}{N^{2}} \sum_{\substack{j_1, j_2, \\ j_1 < J-2\log J}} 
\sum_{u\in P_{A}\left(j_2\right)-P_{A}\left(j_1\right)}
\mathrm{r}\left(u\right)^{2}\left|C\left(u,u\right)
\right| & \ll_s & \frac{J^2}{N^2} \frac{N^3}{(\log N)^{16/5}} \frac{1}{N} \nonumber\\
& \ll &
\frac{1}{\left(\log N\right)^{6/5}}. \label{eq: smallish part of variance of arithm differences}
\end{eqnarray}

It remains to control the contribution from the range 
$J-2\log J\leq j_1 < j_2 \leq J$. Here it plays a crucial role 
that for $j_1,j_2$ in this range, by construction 
there are two \emph{different} primes $m_{j_1}$ and $m_{j_2}$ 
which form the step sizes of the arithmetic progression $P_A(j_1)$ and $P_A(j_2)$, respectively (cf.\ Lemma \ref{lemma_1}). Therefore, in such a situation $\mathrm{r}\left(u\right)$ is bounded by the number of
solutions $\left(x,y\right)\in\mathbb{Z}^{2}$ to the linear Diophantine
equation
\[
\tilde{u}=m_{j_2}x-m_{j_1}y\qquad\mathrm{where}
\quad\tilde{u}\coloneqq u -
\min \{P_{A}(j_2)\}+\min \{P_{A}(j_1)\},
\]
and $(x,y)$ satisfies the additional restriction 
that $1\leq x,y\leq N/(\log N)^{\deelta}$. 
Since $m_{j_1}$ and $m_{j_2}$
are prime numbers, the set of integer solutions
to this equation admits the form 
\[
(x_{0}+hm_{j_1},y_{0}-h m_{j_2}),
\]
where $h\in\mathbb{Z}$ and $\left(x_{0},y_{0}\right)$ 
is some solution to the above equation.
Moreover, the size of $j_1$ and $j_2$, together with 
(\ref{eq: regime of the moduli quantified by their level}),
ensures that $m_{j_1} \asymp m_{j_2} \asymp (\log N)^{1/4}$.
Hence,
\begin{equation}
\mathrm{r}\left(u\right)\ll
\frac{N}{\left(\log N\right)^{\nicefrac{1}{2}+\nicefrac{\varepsilon}{3}}}. \label{eq: estimate on number of representations for late blocks}
\end{equation}
Thus using \eqref{eq: bound for the Fourier coefficients on the diagonal  in variance estiamte}, \eqref{papg} and  (\ref{eq: estimate on number of representations 
for late blocks}), and noting that $\log J \ll \log \log N \ll (\log N)^{\varepsilon/2}$,  we obtain
that
\begin{eqnarray*}
\frac{1}{N^{2}}
\sum_{\substack{j_1,j_2,\\J-2\log J\leq j_1 < j_2 \leq J}} \underset{u\in P_{A}\left(j_2\right)-P_{A}
\left(j_1 \right)}{\sum} \mathrm{r}\left(u\right)^{2}\left|C\left(u,u\right)
\right| &\ll_s &
\frac{1}{N^{2}}
\sum_{\substack{j_1,j_2,\\J-2\log J\leq j_1 < j_2 \leq J}} \frac{N^3}{(\log N)^{1 + \varepsilon}} \frac{1}{N}  \\
& \ll & \frac{1}{\left(\log N\right)^{1+\nicefrac{\varepsilon}{2}}}.
\end{eqnarray*}
Combining this with (\ref{eq: smallish part of variance 
of arithm differences})
yields (\ref{eq: variance estimates for the arithm and arithm diferences}).
\end{proof}

\section{Proof of Theorem \ref{thm: main theorem}}\label{sec: proof of main thm}

Let $N$ be given. There is a number $J$ such that $a_N \in P_A(J) \cup P_G(J)$, and for this value of $J$ we have $J \asymp \log N$ and $2^J \asymp N$. Then
\[
E\bigl(A_{N}\bigr)\geq E
\bigl(P_{A}\left(J-1 \right)\bigr)
\gg \frac{N^{3}}{(\log N)^{3/4 + \varepsilon}},
\]
where we used that the additive energy 
of an arithmetic progression is proportional 
to the third power of its cardinality, 
and that by construction 
$\# P_{A}\left(J-1\right) \gg 2^{J}/J^{\deelta} \gg N/(\log N)^{\deelta}$. Thus the additive energy of the sequence constructed in our example is indeed as large as claimed in the statement of the theorem.\\

It remains to show that $(a_n)_{n}$ has the metric pair correlation property. Recall that the contribution coming from the geometric blocks gives the desired convergence $R\left(GG,\alpha,s,N\right)
\rightarrow 2s$ for almost all $\alpha$, for 
every fixed $s>0$, cf.\ (\ref{eq: counting of the geometric 
parts is Poissonian}). It is a standard procedure to use the variance estimates 
and the results from the previous section to conclude 
that the contribution of the parts $R\left(AG\right)$ and $R\left(AA\textsubscript{diff}\right)$
tends to zero in the limit; 
thus we will only give a brief outline. 
Fix a rational $s>0$. Define the sequence 
$$
N_{m}=\bigl\lfloor\exp\bigl(m^{\frac{1}{1+\varepsilon/2}}\bigr)\bigr\rfloor
$$
and note that $N_{m+1}/N_{m}\rightarrow1$. 
If $N$ is such that $N_{m}\leq N<N_{m+1}$, then 
\[
N R\bigl(AG,\alpha,s,N\bigr)
\leq N_{m+1} R
\bigl(AG,\alpha,N_{m+1}/N_{m}s,N_{m+1}\bigr).
\]
Denote by $E_{AG,s}\left(N_{m}\right)$ the ``exceptional'' set 
$$
\Big\{ \alpha\in\left[0,1\right]:\left| R
\bigl(AG,\alpha,N_{m}/N_{m+1}s,N\bigr)
-\mu_{AG,s}\left(N_{m}\right)\right|\geq1/\log\log N_{m}\Big\}
$$
where $\mu_{AG,s}\left(N_{m}\right)$ is the expected value of 
$R\bigl(AG,\alpha,N_{m}/N_{m+1}s,N\bigr)$. Observe that $\mu_{AG,s}\left(N_{m}\right)\rightarrow 0$ as $m\rightarrow\infty$,
since the indices of those elements of $(a_n)_n$ which
come from an arithmetic block are contained in a 
set of zero density within the total index set.
Combining Chebyshev's inequality with the variance estimates 
from Lemma
\ref{prop: bounding the variance of the mixed differences of 
geom and arth part} and Lemma \ref{lem_AA}, and applying the Borel-Cantelli lemma, we obtain 
\begin{equation}
R\bigl(AG,\alpha,s,N\bigr)
\underset{N\rightarrow\infty}{\longrightarrow}0,\label{eq: counting of interactions between different blocks generically goes to zero}
\end{equation}
for all rational $s$ and for Lebesgue almost all $\alpha\in\left[0,1\right]$. Exactly the same argument works if (AG) is replaced by (AA\textsubscript{diff}).\\

Finally we have to show that $R\left(AA\textsubscript{same}\right) \rightarrow 0$ for almost all $\alpha$. Let $s >0$ be fixed, and assume that $s$ is rational. By the Borel--Bernstein theorem, almost no $\alpha \in [0,1]$
has infinitely many $d\geq N/(\log N)^{3/2}$ such that $I_{s,N}(d\alpha)=1$. 
Hence it is sufficient to estimate 
the contribution of those differences $d$ which are contained in $(P_A(j) - P_A(j))^+$ for a value of $j$ which is close to $J$. More precisely,
we can restrict $j$ to the range $J-2\log J\leq j \leq J$. 
By Lemma \ref{prop: controlling the number of 
high energy multiples which are counted }, for almost all $\alpha\in[0,1]$ we have
\begin{eqnarray}
R\bigl(AA\textsubscript{same},\alpha,s,N\bigr)
& \ll & 
\frac{1}{N} \sum_{J-2\log J\leq j \leq J} \frac{2^j}{j^{\deelta}} \cdot \# \left\{d\in (P_A(j) - P_A(j))^+ :~\|d \alpha\| \leq \frac{s}{N} \right\} \nonumber \\
& \ll_{s} & \frac{1}{N} \sum_{J-2\log J\leq j \leq J} \frac{2^j}{j^{\nicefrac{\varepsilon}{3}}} \nonumber \\
& \ll & (\log N)^{-\nicefrac{\varepsilon}{6}}, \label{eq: counting of the high engery differences generically goes to zero}
\end{eqnarray}
where we estimated $\log J \ll \log \log N \ll (\log N)^{\nicefrac{\varepsilon}{6}}$.\\

Thus we have $R(GG) \to 2s$, and $R(AG) \to 0, ~R(AA\textsubscript{diff}) \to 0, ~R(AA\textsubscript{same})  \to 0$, for all rational $s>0$, for almost all $\alpha$. However, if this convergence holds for all rational $s>0$ and almost all $\alpha$, then by monotonicity it must also hold for all real $s>0$ and almost all $\alpha$. In view of the decomposition \eqref{decom:R} this concludes the proof of the theorem.

\paragraph*{Acknowledgements}
The authors would like to thank the anonymous referee 
for many valuable suggestions which significantly 
improved the presentation of this paper.
CA is supported by the Austrian Science Fund (FWF), 
projects Y-901 and F 5512-N26. TL is also supported by FWF project Y-901. 
NT is supported by FWF project W1230. The present work was,
to a non-trivial part, carried out while 
NT was visiting the number theory group of the University
of York. He wishes to thank this group for its warm hospitality, and
the pleasant memories. 


\end{document}